\def\polhk#1{\setbox0=\hbox{#1}{\ooalign{\hidewidth\lower1.5ex\hbox{`}\hidewidth\crcr\unhbox0}}}
\def\B{\mathscr B}
\def\C{\mathbb C}
\def\d{\mathrm{d}}
\def\F{\mathscr F}
\def\H{\mathcal H}
\def\L{\mathscr L}
\def\N{\mathbb N}
\def\Q{\mathbb Q}
\def\R{\mathbb R}
\def\S{\mathbb S}
\def\SU{\mathsf{SU}}
\def\T{\mathbb T}
\def\U{\mathsf U}
\def\Z{\mathbb Z}
\def\dom{\mathcal D}
\def\e{\mathop{\mathrm{e}}\nolimits}
\def\linf{\mathsf{L}^{\:\!\!\infty}}
\def\ltwo{\mathsf{L}^{\:\!\!2}}
\def\slim{\mathop{\hbox{\rm s-}\lim}\nolimits}
\newtheorem{Theorem}{Theorem}[section]
\newtheorem{Remark}[Theorem]{Remark}
\newtheorem{Lemma}[Theorem]{Lemma}
\newtheorem{Assumption}[Theorem]{Assumption}
\begin{document}


\title{The absolute continuous spectrum of skew products of compact Lie groups}

\author{R. Tiedra de Aldecoa\footnote{Supported by the Chilean Fondecyt Grant
1130168, by the Iniciativa Cientifica Milenio ICM P07-027-F ``Mathematical Theory of
Quantum and Classical Magnetic Systems'' from the Chilean Ministry of Economy and by
the ECOS/CONICYT Grant C10E01.}}

\date{\small}
\maketitle \vspace{-1cm}

\begin{quote}
\emph{
\begin{itemize}
\item[] Facultad de Matem\'aticas, Pontificia Universidad Cat\'olica de Chile,\\
Av. Vicu\~na Mackenna 4860, Santiago, Chile
\item[] \emph{E-mail:} rtiedra@mat.puc.cl
\end{itemize}
}
\end{quote}


\begin{abstract}
Let $X$ and $G$ be compact Lie groups, $F_1:X\to X$ the time-one map of a $C^\infty$
measure-preserving flow, $\phi:X\to G$ a continuous function and $\pi$ a
finite-dimensional irreducible unitary representation of $G$. Then, we prove that the
skew products
$$
T_\phi:X\times G\to X\times G,\quad(x,g)\mapsto\big(F_1(x),g\;\!\phi(x)\big),
$$
have purely absolutely continuous spectrum in the subspace associated to $\pi$ if
$\pi\circ\phi$ has a Dini-continuous Lie derivative along the flow and if a matrix
multiplication operator related to the topological degree of $\pi\circ\phi$ has
nonzero determinant. This result provides a simple, but general, criterion for the
presence of an absolutely continuous component in the spectrum of skew products of
compact Lie groups. As an illustration, we consider the cases where $F_1$ is an
ergodic translation on $\T^d$ and $X\times G=\T^d\times\T^{d'}$,
$X\times G=\T^d\times\SU(2)$ and $X\times G=\T^d\times\U(2)$. Our proofs rely on
recent results on positive commutator methods for unitary operators.
\end{abstract}

\textbf{2010 Mathematics Subject Classification\;\!:} 37A30, 37C40, 58J51, 28D10.

\smallskip

\textbf{Keywords\;\!:} Skew products, compact Lie groups, continuous spectrum,
commutator methods.

\section{Introduction}
\setcounter{equation}{0}

In his seminal work \cite{Anz51}, H. Anzai has shown that the skew products
$$
T_m:\T\times\T\to\T\times\T,\quad(x,g)\mapsto(x+y,g+mx),
\quad\T\simeq\R/\Z,~y\in\R\setminus\Q,~m\in\Z\setminus\{0\},
$$
have countable Lebesgue spectrum in the orthocomplement of functions depending only
on the first variable. Since then, various generalisations of this result have been
obtained (see \cite{Cho87,Cho94,Fra95,Fra00,Fra00_2,Goo99,HP78,Iwa97_1,Iwa97_2,ILM99,
ILR93,Kus74,Med94} and see also \cite{AC00,Fay02,Iwa95,Wys04} for related results).
In particular, A. Iwanik, M. Lema\'nzyk and D. Rudolph have shown in \cite{ILR93}
that the skew products
$$
T_\phi:\T\times\T\to\T\times\T,\quad(x,g)\mapsto\big(x+y,g+\phi(x)\big),
\quad y\in\R\setminus\Q,
$$
have countable Lebesgue spectrum if the cocycle $\phi:\T\to\T$ is an absolutely
continuous function with nonzero topological degree and with derivative of bounded
variation. Also, A. Iwanik and K. Fr{\polhk{a}}czek have proved in
\cite{Fra95,Fra00,Iwa97_2} similar results for skew products of higher dimensional
tori. Finally, K. Fr{\polhk{a}}czek has shown in \cite{Fra00_2} (see also
\cite{Fra04}) that the skew products
$$
T_\phi:\T\times\SU(2)\to\T\times\SU(2),\quad(x,g)\mapsto\big(x+y,g\;\!\phi(x)\big),
\quad y\in\R\setminus\Q,
$$
have countable Lebesgue spectrum in an appropriate subspace of the orthocomplement
of functions depending only on the first variable if the cocycle $\phi:\T\to\SU(2)$
is of class $C^2$, has nonzero topological degree and is cohomologous (in a suitable
way) to a diagonal cocycle.

The purpose of this paper is to extend these types of spectral results to a general
class of skew products of compact Lie groups. Our set-up is the following. We
consider skew products
$$
T_\phi:X\times G\to X\times G,\quad(x,g)\mapsto\big(F_1(x),g\;\!\phi(x)\big),
$$
where $X$ and $G$ are compact Lie groups, $F_1:X\to X$ the time-one map of a
$C^\infty$ measure-preserving flow and $\phi:X\to G$ a continuous function. We fix
$\pi$ a finite-dimensional irreducible unitary representation of $G$ and we write
$\L_Y$ for the Lie derivative associated to the flow. Then, our main result reads as
follows. If the Lie derivative $\L_Y(\pi\circ\phi)$ exists and satisfies a Dini-type
condition along the flow and if a matrix multiplication operator related to the
topological degree of $\pi\circ\phi$ has nonzero determinant, then $T_\phi$ has
purely absolutely continuous spectrum in the subspace associated to $\pi$ (see
Theorem \ref{second_theorem} and Remark \ref{Rem_degree} for a precise satement).
This result provides a simple, but general, criterion for the presence of an
absolutely continuous component in the spectrum of skew products of compact Lie
groups. Its proof relies on recent results \cite{FRT12} on positive commutator
methods for unitary operators. As an illustration, we consider the cases where $F_1$
is an ergodic translation on $\T^d$ and $X\times G=\T^d\times\T^{d'}$,
$X\times G=\T^d\times\SU(2)$ and $X\times G=\T^d\times\U(2)$. In the cases
$X\times G=\T^d\times\T^{d'}$ and $X\times G=\T^d\times\SU(2)$, we obtain countable
Lebesgue spectrum under conditions similar to the ones to be found in the literature;
see Theorems \ref{thm_abelian} and \ref{thm_SU(2)} and the discussions that follow.
In the case $X\times G=\T^d\times\U(2)$, our result (countable Lebesgue spectrum in
an appropriate subspace) is new; see Theorem \ref{thm_U(2)} and the discussion that
follows.

Here is a brief description of the content of the paper. In Section
\ref{Sec_commutator}, we recall the needed notations and results on positive
commutator methods for unitary operators. In Section \ref{Sec_spectrum}, we construct
an appropriate conjugate operator (Lemma \ref{lemma_conjugate} and Formula
\eqref{second_operator}) and use it to prove our main theorem on the spectrum of skew
products (Theorem \ref{second_theorem}). We also give an interpretation of our result
in terms of the topological degree of $\pi\circ\phi$ (Remark \ref{Rem_degree}).
Finally, we present in Sections \ref{Sec_abelian}, \ref{Sec_SU(2)} and \ref{Sec_U(2)}
the examples $X\times G=\T^d\times\T^{d'}$, $X\times G=\T^d\times\SU(2)$ and
$X\times G=\T^d\times\U(2)$.

To conclude, we mention two possible extensions of this paper worth studying\;\!: (i)
We consider here skew products where the dynamics on the base space is given by the
time-one map of a flow. This guarantees the existence of a distinguished Lie
derivative which is used for the definition of the conjugate operator. It would be
interesting to see if one can still construct a suitable conjugate operator even if
the dynamics on the base space is not given by the time-one map of a flow. (ii) It
would be interesting to see if the result of this paper could be extended to the case
where $X$ and $G$ are noncompact Lie groups. In such a case, the main difficulty
would be to deal with the unavailability of Peter-Weyl theorem, which is repeatedly
used in this paper.\\

\noindent
{\bf Acknowledgements.} The author is grateful for the hospitality of Professors A.
Katok and F. Rodriguez Hertz at Penn State University in January 2013.

\section{Commutator methods for unitary operators}\label{Sec_commutator}
\setcounter{equation}{0}

We briefly recall in this section some facts on commutator methods for unitary
operators borrowed from \cite{FRT12}. We refer the reader to \cite{ABG,Mou81} for
standard references in the case of self-adjoint operators.

Let $\H$ be a Hilbert space with scalar product
$\langle\;\!\cdot\;\!,\;\!\cdot\;\!\rangle$ antilinear in the first argument, denote
by $\B(\H)$ the set of bounded linear operators on $\H$, and write
$\|\;\!\cdot\;\!\|$ both for the norm on $\H$ and the norm on $\B(\H)$. Let $A$ be a
self-adjoint operator in $\H$ with domain $\dom(A)$, and take $S\in\B(\H)$. For any
$k\in\N$, we say that $S$ belongs to $C^k(A)$, with notation $S\in C^k(A)$, if the
map
\begin{equation}\label{eq_group}
\R\ni t\mapsto\e^{-itA}S\e^{itA}\in\B(\H)
\end{equation}
is strongly of class $C^k$. In the case $k=1$, one has $S\in C^1(A)$ if and only if
the quadratic form
$$
\dom(A)\ni\varphi\mapsto\big\langle\varphi,iSA\;\!\varphi\big\rangle
-\big\langle A\;\!\varphi,iS\varphi\big\rangle\in\C
$$
is continuous for the topology induced by $\H$ on $\dom(A)$. We denote by $[iS,A]$
the bounded operator associated with the continuous extension of this form, or
equivalently the strong derivative of the function \eqref{eq_group} at $t=0$.

A condition slightly stronger than the inclusion $S\in C^1(A)$ is provided by the
following definition\;\!: $S$ belongs to $C^{1+0}(A)$, with notation
$S\in C^{1+0}(A)$, if $S\in C^1(A)$ and if $[A,S]$ satisfies the Dini-type condition
$$
\int_0^1\frac{\d t}t\;\!\big\|\e^{-itA}[A,S]\e^{itA}-[A,S]\big\|<\infty.
$$
As banachisable topological vector spaces, the sets $C^2(A)$, $C^{1+0}(A)$, $C^1(A)$
and $C^0(A)\equiv\B(\H)$ satisfy the continuous inclusions \cite[Sec.~5.2.4]{ABG}
$$
C^2(A)\subset C^{1+0}(A)\subset C^1(A)\subset C^0(A)\equiv\B(\H).
$$

Now, let $U\in C^1(A)$ be a unitary operator with (complex) spectral measure
$E^U(\;\!\cdot\;\!)$ and spectrum $\sigma(U)\subset\mathbb S^1:=\{z\in\C\mid|z|=1\}$.
If there exist a Borel set $\Theta\subset\S^1$, a number $a>0$ and a compact operator
$K\in\B(\H)$ such that
\begin{equation}\label{Mourre_U}
E^U(\Theta)\;\!U^*[A,U]E^U(\Theta)\ge a\;\!E^U(\Theta)+K,
\end{equation}
then one says that $U$ satisfies a Mourre estimate on $\Theta$ and that $A$ is a
conjugate operator for $U$ on $\Theta$. Also, one says that $U$ satisfies a strict
Mourre estimate on $\Theta$ if \eqref{Mourre_U} holds with $K=0$. The main
consequence of a strict Mourre estimate is to imply a limiting absorption principle
for (the Cayley transform of) $U$ on $\Theta$ if $U$ is also of class $C^{1+0}(A)$.
This in turns implies that $U$ has no singular spectrum in $\Theta$. If $U$ only
satisfies a Mourre estimate on $\Theta$, then the same holds up to the possible
presence of a finite number of eigenvalues in $\Theta$, each one of finite
multiplicity. We recall here a version of these results (see
\cite[Thm.~2.7 \& Rem.~2.8]{FRT12} for more details):

\begin{Theorem}[Mourre estimate for unitary operators]\label{thm_unitary}
Let $U$ and $A$ be respectively a unitary and a self-ajoint operator in a Hilbert
space $\H$, with $U\in C^{1+0}(A)$. Suppose there exist an open set
$\Theta\subset\mathbb S^1$, a number $a>0$ and a compact operator $K\in\B(\H)$ such
that
\begin{equation}\label{Mourre_U_bis}
E^U(\Theta)\;\!U^*[A,U]E^U(\Theta)\ge a\;\!E^U(\Theta)+K.
\end{equation}
Then, $U$ has at most finitely many eigenvalues in $\Theta$, each one of finite
multiplicity, and $U$ has no singular continuous spectrum in $\Theta$. Furthermore,
if \eqref{Mourre_U_bis} holds with $K=0$, then $U$ has no singular spectrum in
$\Theta$.
\end{Theorem}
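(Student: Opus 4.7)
The natural strategy is to reduce Theorem \ref{thm_unitary} to the classical Mourre theory for self-adjoint operators via a Cayley-type transform. I would fix a closed arc $\Theta_0\subset\Theta$ together with a base point $z_0\in\S^1\setminus\Theta_0$, and use the smooth diffeomorphism $\varphi_0:\S^1\setminus\{z_0\}\to\R$ (a Cayley transform) to define the bounded self-adjoint operator $H:=\varphi_0(U)$, whose spectral projections satisfy $E^H\big(\varphi_0(\Theta_0)\big)=E^U(\Theta_0)$.

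The first step is a regularity transfer\;\!: one must show $H\in C^{1+0}(A)$ whenever $U\in C^{1+0}(A)$. This follows from stability of the $C^{1+0}(A)$ class under $C^\infty$ functional calculus, which rests on the Helffer-Sj\"ostrand formula combined with the standard estimates from \cite[Sec.~5.2]{ABG}. The second step is a chain rule obtained by differentiating $t\mapsto\e^{-itA}\varphi_0(U)\e^{itA}$ at $t=0$\;\!; up to a symmetrised correction,
$$
[A,H]\;\!\approx\;\!\varphi_0'(U)\;\!U^*[A,U],
$$
so that $\varphi_0'(U)$, bounded below by a positive constant on $\Theta_0$ since $\varphi_0$ has nonvanishing derivative, transports positivity. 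Multiplying \eqref{Mourre_U_bis} on both sides by $E^U(\Theta_0)=E^H\big(\varphi_0(\Theta_0)\big)$ then yields a Mourre estimate
$$
E^H\big(\varphi_0(\Theta_0)\big)\;\![A,H]\;\!E^H\big(\varphi_0(\Theta_0)\big)\;\!\ge\;\!a'\;\!E^H\big(\varphi_0(\Theta_0)\big)+K'
$$
for some $a'>0$ and some compact $K'\in\B(\H)$, with strictness preserved when $K=0$.

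At this point the self-adjoint Mourre theorem \cite{ABG,Mou81} applies directly\;\!: combined with $H\in C^{1+0}(A)$, the estimate gives absence of singular continuous spectrum of $H$ in $\varphi_0(\Theta_0)$, while the virial theorem together with compactness of $K'$ forces at most finitely many eigenvalues of finite multiplicity inside $\varphi_0(\Theta_0)$. Pulling these conclusions back through $\varphi_0^{-1}$ translates them to $U$ on $\Theta_0$\;\!; exhausting the open set $\Theta$ by such closed arcs $\Theta_0$ completes the proof. When $K=0$, the strict Mourre estimate upgrades to a genuine limiting absorption principle for $H$, whence absence of all singular spectrum of $U$ in $\Theta$.

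The main obstacle in this plan is precisely the regularity transfer\;\!: verifying that the Dini-type condition built into $C^{1+0}(A)$ survives the functional calculus $U\mapsto\varphi_0(U)$, and that the chain rule above holds with a remainder that is controlled in operator norm. Once this functional-calculus point is handled, everything else amounts to standard positive-commutator bookkeeping, with the virial theorem controlling the point spectrum and the limiting absorption principle controlling the continuous spectrum.
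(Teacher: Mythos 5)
First, a point of reference: the paper does not prove Theorem \ref{thm_unitary} at all --- it is imported verbatim from \cite[Thm.~2.7 \& Rem.~2.8]{FRT12}, so the comparison must be with the proof there. That proof does \emph{not} pass through a Cayley transform: it develops the positive commutator machinery directly for unitary operators (a virial theorem for $U$ to control the point spectrum, and a Mourre-type differential inequality for the resolvent $(1-zU)^{-1}$ as $|z|\to1$ to obtain the limiting absorption principle), precisely so that the optimal regularity $U\in C^{1+0}(A)$ suffices. Your plan --- reduce to the self-adjoint Mourre theorem via $H:=\varphi_0(U)$ --- is a genuinely different route, and the two steps you flag as ``the main obstacle'' are in fact where the argument breaks down as written.

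Concretely: (i) you cannot in general choose a base point $z_0\in\S^1\setminus\sigma(U)$, since for the Koopman operators this theorem is applied to one typically has $\sigma(U)=\S^1$ (countable Lebesgue spectrum); then $H=\varphi_0(U)$ is an \emph{unbounded} self-adjoint operator, the claim that $E^H(\varphi_0(\Theta_0))=E^U(\Theta_0)$ must be reinterpreted, and the class $C^{1+0}(A)$ for $H$ has to be formulated through its resolvent, so the ``regularity transfer'' is not a routine application of stability under $C^\infty$ functional calculus. (ii) The chain rule $[A,H]\approx\varphi_0'(U)\,U^*[A,U]$ is exactly the crux and is not established. The exact formula (via a Helffer--Sj\"ostrand or double-operator-integral representation) reads
$$
[A,\varphi_0(U)]
=\frac1{2\pi}\int_\C\bar\partial\tilde\varphi_0(z)\,(U-z)^{-1}[A,U](U-z)^{-1}\,\d z\wedge\d\bar z,
$$
and to extract positivity of $E^H(\cdot)[A,H]E^H(\cdot)$ from the hypothesis on $E^U(\Theta)\,U^*[A,U]\,E^U(\Theta)$ you must commute $[A,U]$ past resolvents of $U$; the resulting error terms are double commutators that are controlled only under $U\in C^2(A)$, strictly more than the assumed $C^{1+0}(A)$, and they are not obviously compact. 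Note also that the factor $U^*$ in \eqref{Mourre_U_bis} is what makes the localized commutator self-adjoint and the inequality meaningful; it does not emerge from a naive functional-calculus differentiation, so the ``symmetrised correction'' cannot be waved away. Unless these points are resolved (which would essentially amount to redoing the analysis of \cite{FRT12} in a harder disguise), the proposal does not constitute a proof.
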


\section{Spectrum of skew products of compact Lie groups}\label{Sec_spectrum}
\setcounter{equation}{0}

Let $X$ be a compact Lie group with normalised Haar measure $\mu_X$ and neutral
element $e_X$, and let $\{F_t\}_{t\in\R}$ be a $C^\infty$ measure-preserving flow on
$(X,\mu_X)$. Then, the family of operators $\{V_t\}_{t\in\R}$ given by
$$
V_t\;\!\varphi:=\varphi\circ F_t,\quad\varphi\in\ltwo(X,\mu_X),
$$
defines a strongly continuous one-parameter unitary group satisfying
$V_t\;\!C^\infty(X)\subset C^\infty(X)$ for each $t\in\R$ \cite[Prop.~2.6.14]{AM78}.
It follows from Nelson's theorem \cite[Prop.~5.3]{Amr09} that the generator $H$ of
the group $\{V_t\}_{t\in\R}$
$$
H\varphi:=\slim_{t\to0}it^{-1}(V_t-1)\varphi,
\quad\varphi\in\dom(H):=\left\{\varphi\in\ltwo(X,\mu_X)
\mid\lim_{t\to0}|t|^{-1}\big\|(V_t-1)\varphi\big\|<\infty\right\},
$$
is essentially self-adjoint on $C^\infty(X)$, and one has
$$
H\varphi:=-i\;\!\L_Y\varphi,\quad\varphi\in C^\infty(X),
$$
with $Y$ the divergence-free vector field associated to $\{F_t\}_{t\in\R}$ and $\L_Y$
the corresponding Lie derivative.

Let $G$ be a second compact Lie group with normalised Haar measure $\mu_G$ and
neutral element $e_G$. Then, each $\phi\in C(X;G)$ induces a cocycle
$X\times\Z\ni(x,n)\mapsto\phi^{(n)}(x)\in G$ over the diffeomorphism $F_1$ given by
$$
\phi^{(n)}(x):=
\begin{cases}
\phi(x)(\phi\circ F_1)(x)\cdots(\phi\circ F_{n-1})(x) & \hbox{if }n\ge1\\
e_G & \hbox{if }n=0\\
\big\{(\phi\circ F_n)(x)(\phi\circ F_{n+1})(x)\cdots(\phi\circ F_{-1})(x)\big\}^{-1}
& \hbox{if }n\le-1.
\end{cases}
$$
We thus call cocycle any function belonging to $C(X;G)$. Two cocycles
$\phi,\xi\in C(X;G)$ are $C^0$-cohomologous if there exists a function
$\zeta\in C(X;G)$, called transfer function, such that
$$
\phi(x)=\zeta(x)^{-1}\;\!\xi(x)\;\!\zeta\big(F_1(x)\big),\quad x\in X.
$$
In such a case, the map $X\times G\ni(x,g)\mapsto\big(x,\zeta(x)g\big)$ establishes a
$C^0$-conjugation of $T_\phi$ and $T_\xi$. The skew product $T_\phi$ associated to
$\phi$,
$$
T_\phi:(X\times G,\mu_X\otimes\mu_G)\to(X\times G,\mu_X\otimes\mu_G),\quad
(x,g)\mapsto\big(F_1(x),g\;\!\phi(x)\big),
$$
is an automorphism of the measure space $(X\times G,\mu_X\otimes\mu_G)$ which
satisfies
\begin{equation}\label{induc_T}
T_\phi^n(x,g)=\big(F_n(x),g\;\!\phi^{(n)}(x)\big),\quad x\in X,~g\in G,~n\in\Z.
\end{equation}
Since $T_\phi$ is invertible, the corresponding Koopman operator
$$
U_\phi\;\!\psi:=\psi\circ T_\phi,\quad\psi\in\H:=\ltwo(X\times G,\mu_X\otimes\mu_G),
$$
is a unitary operator in $\H$.

Let $\widehat G$ be the set of all (equivalence classes of) finite-dimensional
irreducible unitary representations (IUR) of $G$. Then, each representation
$\pi\in\widehat G$ is a $C^\infty$ group homomorphism from $G$ to the unitary group
$\U(d_\pi)$ of degree $d_\pi:=\dim(\pi)<\infty$, and the Peter-Weyl theorem implies
that the set of all matrix elements $\{\pi_{jk}\}_{j,k=1}^{d_\pi}$ of all
representations $\pi\in\widehat G$ form an orthogonal basis of $\ltwo(G,\mu_G)$.
Accordingly, one has the orthogonal decomposition
\begin{equation}\label{eq_decompo}
\H=\bigoplus_{\pi\in\widehat G}\bigoplus_{j=1}^{d_\pi}\H^{(\pi)}_j
\quad\hbox{with}\quad
\H^{(\pi)}_j:=\left\{\sum_{k=1}^{d_\pi}\varphi_k\otimes\pi_{jk}\mid
\varphi_k\in\ltwo(X,\mu_X),~k=1,\ldots,d_\pi\right\},
\end{equation}
and one has a natural isomorphism
\begin{equation}\label{eq_iso}
\H^{(\pi)}_j\simeq\bigoplus_{k=1}^{d_\pi}\ltwo(X,\mu_X),
\end{equation}
due to the orthogonality of the matrix elements $\{\pi_{jk}\}_{j,k=1}^{d_\pi}$.

A direct calculation shows that the operator $U_\phi$ is reduced by the decomposition
\eqref{eq_decompo} and that the restriction $U_{\pi,j}:=U_\phi|_{\H^{(\pi)}_j}$ is
given by
$$
U_{\pi,j}\sum_{k=1}^{d_\pi}\varphi_k\otimes\pi_{jk}
=\sum_{k,\ell=1}^{d_\pi}(V_1\varphi_k)(\pi_{\ell k}\circ\phi)\otimes\pi_{j\ell}\;\!,
\quad\varphi_k\in\ltwo(X,\mu_X).
$$
This, together with \eqref{induc_T}, implies that
$$
(U_{\pi,j})^n\sum_{k=1}^{d_\pi}\varphi_k\otimes\pi_{jk}
=\sum_{k,\ell=1}^{d_\pi}(V_n\;\!\varphi_k)\big(\pi_{\ell k}\circ\phi^{(n)}\big)
\otimes\pi_{j\ell},\quad n\in\Z,~\varphi_k\in\ltwo(X,\mu_X).
$$

\begin{Assumption}[Cocycle]\label{ass_phi}
For each $k,\ell\in\{1,\ldots,d_\pi\}$, the function
$\pi_{k\ell}\circ\phi\in C(X;\C)$ has a Lie derivative $\L_Y(\pi_{k\ell}\circ\phi)$
which satisfies the following Dini-type condition along the flow $\{F_t\}_{t\in\R}:$
$$
\int_0^1\frac{\d t}t\,\big\|\L_Y(\pi_{k\ell}\circ\phi)\circ F_t
-\L_Y(\pi_{k\ell}\circ\phi)\big\|_{\linf(X)}
<\infty.
$$
\end{Assumption}

Assumption \ref{ass_phi} implies that $-i\;\!\L_Y(\pi\circ\phi)\cdot(\pi^*\circ\phi)$
is a continuous hermitian matrix-valued function. In particular, the matrix
multiplication operator $M$ given by
$$
M\sum_{k=1}^{d_\pi}\varphi_k\otimes\pi_{jk}
:=-i\sum_{k,\ell=1}^{d_\pi}a_k\big\{\L_Y(\pi\circ\phi)
\cdot(\pi^*\circ\phi)\big\}_{k\ell}\;\!\varphi_\ell\otimes\pi_{jk},
\quad a_k\in\R,~\varphi_\ell\in\ltwo(X,\mu_X),
$$
is bounded in $\H^{(\pi)}_j$. We use the notation
\begin{equation}\label{formula_M}
M_{k\ell}:=-ia_k\big\{\L_Y(\pi\circ\phi)\cdot(\pi^*\circ\phi)\big\}_{k\ell}\;\!,
\quad k,\ell\in\{1,\ldots,d_\pi\},
\end{equation}
for the matrix elements of $M$.

\begin{Lemma}[Conjugate operator for $U_{\pi,j}$]\label{lemma_conjugate}
Let $\phi$ satisfy Assumption \ref{ass_phi} and suppose that
$(a_k-a_\ell)(\pi_{\ell k}\circ\phi)\equiv0$ for all $k,\ell\in\{1,\ldots,d_\pi\}$.
Then,
\begin{enumerate}
\item[(a)] The operator $A$ given by
$$
A\sum_{k=1}^{d_\pi}\varphi_k\otimes\pi_{jk}
:=\sum_{k=1}^{d_\pi}a_kH\varphi_k\otimes\pi_{jk},
\quad a_k\in\R,~\varphi_k\in C^\infty(X),
$$
is essentially self-adjoint in $\H^{(\pi)}_j$, and its closure (which we denote by
the same symbol) has domain
$$
\dom(A)=\left\{\sum_{k=1}^{d_\pi}\varphi_k\otimes\pi_{jk}\mid
\varphi_k\in\dom(H),~k=1,\ldots,d_\pi\right\}.
$$
Furthermore, one has
\begin{equation}\label{group_formula}
\e^{itA}\sum_{k=1}^{d_\pi}\varphi_k\otimes\pi_{jk}
=\sum_{k=1}^{d_\pi}\e^{ita_kH}\varphi_k\otimes\pi_{jk},
\quad t\in\R,~\varphi_k\in\ltwo(X,\mu_X).
\end{equation}
\item[(b)] For all $k,\ell\in\{1,\ldots,d_\pi\}$ and all $t\in\R$, one has
$$
\big\|\e^{-ita_kH}M_{k\ell}\e^{ita_\ell H}-M_{k\ell}\big\|_{\B(\ltwo(X,\mu_X))}
=\big\|\e^{-ita_kH}M_{k\ell}\e^{ita_kH}-M_{k\ell}\big\|_{\B(\ltwo(X,\mu_X))}.
$$
\item[(c)] $U_{\pi,j}\in C^{1+0}(A)$ with $[A,U_{\pi,j}]=MU_{\pi,j}$.
\end{enumerate}
\end{Lemma}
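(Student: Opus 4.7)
I treat the three parts in order, all exploiting the natural identification $\H^{(\pi)}_j\simeq\bigoplus_{k=1}^{d_\pi}\ltwo(X,\mu_X)$ afforded by \eqref{eq_iso}. Under this identification, the operator in (a) becomes the block-diagonal operator $\bigoplus_{k=1}^{d_\pi}a_kH$ with real coefficients and $H$ essentially self-adjoint on $C^\infty(X)$; essential self-adjointness of $A$ on $\bigoplus_k C^\infty(X)$, the stated domain, and the group formula \eqref{group_formula} (which amounts to $\e^{ita_kH}=V_{ta_k}$) then follow from the standard theory of orthogonal direct sums. For (b), the key remark is that the hypothesis $(a_k-a_\ell)(\pi_{\ell k}\circ\phi)\equiv 0$ forces $\pi_{\ell k}\circ\phi\equiv 0$ whenever $a_k\neq a_\ell$; grouping indices by level sets of $k\mapsto a_k$, the unitary matrix-valued function $\pi\circ\phi$ is block-diagonal with respect to that partition, and so are $\pi^*\circ\phi$, $\L_Y(\pi\circ\phi)$ and their product. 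Thus $M_{k\ell}\equiv 0$ whenever $a_k\neq a_\ell$, so both sides of the claimed equality vanish; the case $a_k=a_\ell$ is trivial.

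For (c), I would first compute $[A,U_{\pi,j}]$ on smooth vectors $\psi=\sum_k\varphi_k\otimes\pi_{jk}$, $\varphi_k\in C^\infty(X)$. Using the intertwining $HV_1=V_1H$ (which follows from $\L_YV_s=V_s\L_Y$) and the Leibniz rule for $\L_Y$ applied to the products $(V_1\varphi_k)(\pi_{\ell k}\circ\phi)$, two types of term appear: one proportional to $(a_\ell-a_k)(\pi_{\ell k}\circ\phi)$, which vanishes by the hypothesis of the lemma, and one proportional to $-ia_\ell\L_Y(\pi_{\ell k}\circ\phi)$. Re-indexing the surviving term and using the unitarity identity $\sum_\ell(\pi^*\circ\phi)_{k\ell}(\pi\circ\phi)_{\ell m}=\delta_{km}$ to collapse the triple product $\L_Y(\pi\circ\phi)\cdot(\pi^*\circ\phi)\cdot(\pi\circ\phi)$ back to $\L_Y(\pi\circ\phi)$ yields exactly $MU_{\pi,j}\psi$. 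This identifies $[A,U_{\pi,j}]=MU_{\pi,j}$ as a bounded operator and gives $U_{\pi,j}\in C^1(A)$.

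For the Dini-type estimate characterising $U_{\pi,j}\in C^{1+0}(A)$ I would use the splitting
\[
\e^{-itA}MU_{\pi,j}\e^{itA}-MU_{\pi,j}=\bigl(\e^{-itA}M\e^{itA}-M\bigr)\e^{-itA}U_{\pi,j}\e^{itA}+M\bigl(\e^{-itA}U_{\pi,j}\e^{itA}-U_{\pi,j}\bigr).
\]
By \eqref{group_formula}, the matrix elements of the first summand reduce to $\e^{-ita_kH}M_{k\ell}\e^{ita_\ell H}-M_{k\ell}$, which by (b) equal $V_{-ta_k}M_{k\ell}V_{ta_k}-M_{k\ell}$; since each $M_{k\ell}$ is multiplication by $m_{k\ell}:=-ia_k\sum_m\L_Y(\pi_{km}\circ\phi)\cdot\overline{\pi_{\ell m}\circ\phi}$, this difference is multiplication by $m_{k\ell}\circ F_{-ta_k}-m_{k\ell}$, whose $\linf$-norm a Leibniz expansion splits into a Dini-integrable part (controlled by Assumption \ref{ass_phi}) and an $O(|t|)$ part (the fundamental theorem of calculus applied to the bounded derivative $\L_Y(\pi_{\ell m}\circ\phi)$). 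For the second summand, the hypothesis again collapses the cross-conjugations inside $\e^{-itA}U_{\pi,j}\e^{itA}$, and a direct estimate gives $\|\e^{-itA}U_{\pi,j}\e^{itA}-U_{\pi,j}\|=O(|t|)$; both contributions are then integrable against $\d t/t$ over $[0,1]$. The main obstacle is the bookkeeping in the commutator computation in (c), where the hypothesis on the $a_k$, Leibniz, and the unitarity of $\pi\circ\phi$ must conspire precisely to rewrite $[A,U_{\pi,j}]$ in the clean multiplicative form $MU_{\pi,j}$; once this is done, the Dini step is a routine follow-up.
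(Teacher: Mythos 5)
Your proposal is correct and follows the same overall strategy as the paper: part (a) via the direct-sum identification \eqref{eq_iso}, part (c) via the Leibniz rule, the commutation of $\L_Y$ with $V_1$, the hypothesis $(a_k-a_\ell)(\pi_{\ell k}\circ\phi)\equiv0$ to kill the cross terms, and the unitarity of $\pi\circ\phi$ to collapse the result into $MU_{\pi,j}$, followed by the reduction of the $C^{1+0}$ condition for $U_{\pi,j}$ to a Dini estimate on $\e^{-itA}M\e^{itA}-M$ and then, via \eqref{group_formula} and part (b), to estimates on $V_s M_{k\ell}V_{-s}-M_{k\ell}$. The one place where you genuinely diverge is part (b): the paper shows $M_{k\ell}\bigl(1-\e^{it(a_k-a_\ell)H}\bigr)=0$ by differentiating the quadratic form $F_{k\ell}(t)$ and using $M_{k\ell}(a_k-a_\ell)=0$, whereas you observe that the hypothesis makes $\pi\circ\phi$ (hence $\pi^*\circ\phi$ and $\L_Y(\pi\circ\phi)$) block-diagonal with respect to the level sets of $k\mapsto a_k$, so that $M_{k\ell}\equiv0$ outright whenever $a_k\ne a_\ell$; this is a strictly stronger conclusion, makes (b) immediate, and avoids the ODE argument, at no cost in generality since both proofs ultimately rest on the same dichotomy. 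You are also more explicit than the paper on two points it leaves implicit: the product-rule splitting of $\e^{-itA}MU_{\pi,j}\e^{itA}-MU_{\pi,j}$ with the $O(|t|)$ bound coming from $U_{\pi,j}\in C^1(A)$, and the Leibniz decomposition of $m_{k\ell}\circ F_s-m_{k\ell}$ into a part controlled by Assumption \ref{ass_phi} and an $O(|s|)$ part coming from the boundedness of $\L_Y(\pi_{\ell m}\circ\phi)$ --- this last step is genuinely needed, since the Dini hypothesis is imposed on $\L_Y(\pi_{k\ell}\circ\phi)$ and not on the product $\L_Y(\pi\circ\phi)\cdot(\pi^*\circ\phi)$. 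The only blemish is the sign convention $\e^{ita_kH}=V_{ta_k}$ (with the paper's definition of $H$ one has $V_t=\e^{-itH}$), which is immaterial for the norm estimates since $F_{-s}$ is measure-preserving.
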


\begin{Remark}
(i) The commutation assumption $(a_k-a_\ell)(\pi_{\ell k}\circ\phi)\equiv0$ for all
$k,\ell\in\{1,\ldots,d_\pi\}$ implies that the matrix $M(x)$ is hermitian for each
$x\in X$, and thus that the operator $M$ is self-adjoint. This assumption is
satisfied if all the $a_k$'s are equal or if the matrix-valued function
$\pi\circ\phi$ is diagonal (which occurs for instance when $G$ is abelian). (ii)
Instead of the diagonal operator $A$, one could use the more general, non-diagonal,
self-adjoint operator
$$
A'\sum_{k=1}^{d_\pi}\varphi_k\otimes\pi_{jk}
:=\sum_{k=1}^{d_\pi}a_{k\ell}\;\!H\varphi_\ell\otimes\pi_{jk},
\quad\varphi_k\in C^\infty(X),~a_{k\ell}=\overline{a_{\ell k}}\in\C.
$$
However, doing this, one ends up with the same commutation relation in Lemma
\ref{lemma_conjugate}(c) (the scalars $a_k$ appearing in the matrix $M$ are just
replaced by the column sums $\sum_{\ell=1}^{d_\pi}a_{k\ell}$). So, there is no gain
in using the operator $A'$ instead of the simpler operator $A$. (iii) The commutation
relation in Lemma \ref{lemma_conjugate}(c) is a matrix version of the commutation
relation put into evidence in \cite[Sec.~2]{Tie13}.
\end{Remark}

\begin{proof}[Proof of Lemma \ref{lemma_conjugate}]
(a)  The image of operator $A$ under the isomorphism \eqref{eq_iso} is the operator
$\bigoplus_{k=1}^{d_\pi}a_kH$. So, all the claims follow from standard results on
direct sums of self-adjoint operators (see for instance \cite[p.~268]{RS78}).

(b) One has
$$
\big\|\e^{-ita_kH}M_{k\ell}\e^{ita_\ell H}-M_{k\ell}\big\|_{\B(\ltwo(X,\mu_X))}
=\big\|\e^{-ita_kH}M_{k\ell}\e^{ita_kH}-M_{k\ell}
+M_{k\ell}\big(1-\e^{it(a_k-a_\ell)H}\big)\big\|_{\B(\ltwo(X,\mu_X))}.
$$
Therefore, it is sufficient to show that
$M_{k\ell}\big(1-\e^{it(a_k-a_\ell)H}\big)=0$, which is equivalent to the condition
$$
\big\langle\varphi,
M_{k\ell}\big(1-\e^{it(a_k-a_\ell)H}\big)\varphi\big\rangle_{\ltwo(X,\mu_X)}=0
\quad\hbox{for all }\varphi\in C^\infty(X),
$$
due to the density of $C^\infty(X)$ in ${\ltwo(X,\mu_X)}$. So, take
$\varphi\in C^\infty(X)$, set
$$
F_{k\ell}(t):=\big\langle\varphi,
M_{k\ell}\big(1-\e^{it(a_k-a_\ell)H}\big)\varphi\big\rangle_{\ltwo(X,\mu_X)},
$$
and note that $M_{k\ell}\;\!(a_k-a_\ell)=0$ due to the assumption
$(a_k-a_\ell)(\pi_{\ell k}\circ\phi)\equiv0$ for all $k,\ell\in\{1,\ldots,d_\pi\}$.
Then, one has
$$
\frac\d{\d t}\;\!F_{k\ell}(t)
:=-i\;\!\big\langle\varphi,M_{k\ell}\;\!(a_k-a_\ell)\e^{it(a_k-a_\ell)H}
H\varphi\big\rangle_{\ltwo(X,\mu_X)}
=0.
$$
It follows that $F_{k\ell}(t)=F_{k\ell}(0)=0$ for all $t\in\R$, which proves the
claim.

(c) Using first that $\L_Y$ and $V_1$ commute and then that
$(a_k-a_\ell)(\pi_{\ell k}\circ\phi)\equiv0$ for all $k,\ell\in\{1,\ldots,d_\pi\}$,
one gets for $\varphi_k\in C^\infty(X)$ that
\begin{align*}
&\big(A\;\!U_{\pi,j}-U_{\pi,j}A\big)\sum_{k=1}^{d_\pi}\varphi_k\otimes\pi_{jk}\\
&=-i\sum_{k,\ell=1}^{d_\pi}a_\ell\;\!\big(\L_Y(V_1\varphi_k)\big)
(\pi_{\ell k}\circ\phi)\otimes\pi_{j\ell}
-i\sum_{k,\ell=1}^{d_\pi}a_\ell\;\!(V_1\varphi_k)
\big(\L_Y(\pi_{\ell k}\circ\phi)\big)\otimes\pi_{j\ell}\\
&\qquad+i\sum_{k,\ell=1}^{d_\pi}a_k\big(V_1\big(\L_Y\varphi_k)\big)
(\pi_{\ell k}\circ\phi)\otimes\pi_{j\ell}\\
&=i\sum_{k,\ell=1}^{d_\pi}(a_k-a_\ell)\big(V_1\big(\L_Y\varphi_k)\big)
(\pi_{\ell k}\circ\phi)\otimes\pi_{j\ell}
-i\sum_{k,\ell=1}^{d_\pi}a_\ell\;\!(V_1\varphi_k)
\big(\L_Y(\pi_{\ell k}\circ\phi)\big)\otimes\pi_{j\ell}\\
&=-i\sum_{k,\ell=1}^{d_\pi}a_\ell\;\!(V_1\varphi_k)
\big(\L_Y(\pi_{\ell k}\circ\phi)\big)\otimes\pi_{j\ell}.
\end{align*}
This, together with the fact that $\L_Y(\pi_{\ell k}\circ\phi)\in\linf(X)$ for all
$k,\ell\in\{1,\ldots,d_\pi\}$ and the density of the vectors
$\sum_{k=1}^{d_\pi}\varphi_k\otimes\pi_{jk}$ in $\H^{(\pi)}_j$, implies that
$U_{\pi,j}\in C^1(A)$ with
$$
[A,U_{\pi,j}]\sum_{k=1}^{d_\pi}\varphi_k\otimes\pi_{jk}
=-i\sum_{k,\ell=1}^{d_\pi}a_\ell\;\!(V_1\varphi_k)
\big(\L_Y(\pi_{\ell k}\circ\phi)\big)\otimes\pi_{j\ell},
\quad\varphi_k\in\ltwo(X,\mu_X).
$$
Then, one obtains
\begin{align*}
[A,U_{\pi,j}](U_{\pi,j})^*\sum_{k=1}^{d_\pi}\varphi_k\otimes\pi_{jk}
&=-i\sum_{k,\ell,m=1}^{d_\pi}a_m\big\{V_1\big((V_{-1}\varphi_k)
(\pi_{\ell k}\circ\phi^{(-1)})\big)\big\}\big(\L_Y(\pi_{m\ell}\circ\phi)\big)
\otimes\pi_{jm}\\
&=-i\sum_{k,\ell,m=1}^{d_\pi}a_m\;\!\varphi_k
(\pi^*\circ\phi)_{\ell k}\big(\L_Y(\pi\circ\phi)\big)_{m\ell}
\otimes\pi_{jm}\\
&=M\sum_{k=1}^{d_\pi}\varphi_k\otimes\pi_{jk},
\end{align*}
which shows the equality $[A,U_{\pi,j}]=MU_{\pi,j}$.

To prove that $U_{\pi,j}\in C^{1+0}(A)$, one has to check that
$$
\int_0^1\frac{\d t}t\;\!
\big\|\e^{-itA}[A,U_{\pi,j}]\e^{itA}-[A,U_{\pi,j}]\big\|_{\B(\H^{(\pi)}_j)}<\infty.
$$
But since $[A,U_{\pi,j}]=MU_{\pi,j}$ with $U_{\pi,j}\in C^1(A)$, it is sufficient to
show that
$$
\int_0^1\frac{\d t}t\;\!\big\|\e^{-itA}M\e^{itA}-M\big\|_{\B(\H^{(\pi)}_j)}
<\infty.
$$
Now, Formula \eqref{group_formula} implies that
$$
\big(\e^{-itA}M\e^{itA}-M\big)\sum_{k=1}^{d_\pi}\varphi_k\otimes\pi_{jk}
=\sum_{k,\ell=1}^{d_\pi}\big(\e^{-ita_kH}M_{k\ell}\e^{ita_\ell H}-M_{k\ell}\big)
\varphi_\ell\otimes\pi_{jk},\quad\varphi_k\in\ltwo(X,\mu_X).
$$
It follows that
\begin{align*}
&\int_0^1\frac{\d t}t\;\!\big\|\e^{-itA}M\e^{itA}-M\big\|_{\B(\H^{(\pi)}_j)}\\
&\le\sum_{k,\ell=1}^{d_\pi}\int_0^1\frac{\d t}t\;\!
\big\|\e^{-ita_kH}M_{k\ell}\e^{ita_\ell H}-M_{k\ell}\big\|_{\B(\ltwo(X,\mu_X))}\\
&=\sum_{k,\ell=1}^{d_\pi}\int_0^1\frac{\d t}t\;\!
\big\|\e^{-ita_kH}M_{k\ell}\e^{ita_kH}-M_{k\ell}\big\|_{\B(\ltwo(X,\mu_X))}\\
&=\sum_{k,\ell=1}^{d_\pi}\int_0^{a_k}\frac{\d s}s\;\!
\big\|V_s\;\!M_{k\ell}\;\!V_{-s}-M_{k\ell}\big\|_{\B(\ltwo(X,\mu_X))}\\
&\le{\rm Const.}\sum_{k,\ell=1}^{d_\pi}\int_0^{a_k}\frac{\d s}s\;\!
\left\|\big\{\L_Y(\pi\circ\phi)\cdot(\pi^*\circ\phi)\big\}_{k\ell}\circ F_s
-\big\{\L_Y(\pi\circ\phi)\cdot(\pi^*\circ\phi)\big\}_{k\ell}\right\|_{\linf(X)}\\
&<\infty,
\end{align*}
due to point (b) and the Dini-type condition satisfied by the functions
$\L_Y(\pi_{k\ell}\circ\phi)$.
\end{proof}

In the following theorem, we present a first set of conditions implying a strict
Mourre estimate for $U_{\pi,j}$ on all of $\S^1$ and thus the absolute continuity of
the spectrum of $U_{\pi,j}$. For each $x\in X$, we write $\lambda_k\big(M(x)\big)$,
$k\in\{1,\ldots,d_\pi\}$, for the eigenvalues of the hermitian matrix $M(x)$, and we
use the notation
\begin{equation}\label{lambda_star}
\lambda_*:=\inf_{k\in\{1,\ldots,d_\pi\},\,x\in X}\,\lambda_k\big(M(x)\big).
\end{equation}

\begin{Theorem}[First Mourre estimate for $U_{\pi,j}$]\label{first_theorem}
Let $\phi$ satisfy Assumption \ref{ass_phi}, suppose that
$(a_k-a_\ell)(\pi_{\ell k}\circ\phi)\equiv0$ for all $k,\ell\in\{1,\ldots,d_\pi\}$,
and assume that $\lambda_*>0$. Then, $U_{\pi,j}$ satisfies the strict Mourre estimate
$$
(U_{\pi,j})^*[A,U_{\pi,j}]\ge\lambda_*,
$$
and $U_{\pi,j}$ has purely absolutely continuous spectrum.
\end{Theorem}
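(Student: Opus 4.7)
The plan is to derive a strict Mourre estimate on $\Theta = \S^1$ and invoke Theorem \ref{thm_unitary}. Most of the work has already been done in Lemma \ref{lemma_conjugate}(c), which supplies both the regularity $U_{\pi,j} \in C^{1+0}(A)$ and the key identity $[A, U_{\pi,j}] = M\;\!U_{\pi,j}$. Multiplying on the left by $(U_{\pi,j})^*$ and using unitarity reduces the statement to verifying that $M \geq \lambda_*$ as a bounded self-adjoint operator on $\H^{(\pi)}_j$, after which we simply read off the conclusion from Theorem \ref{thm_unitary}.

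For the bound $M \geq \lambda_*$, I would use the isomorphism \eqref{eq_iso} to identify $\H^{(\pi)}_j$ with $\bigoplus_{k=1}^{d_\pi} \ltwo(X, \mu_X)$, under which $M$ becomes multiplication by the continuous hermitian matrix-valued function $x \mapsto M(x)$ with entries given by \eqref{formula_M} (hermiticity is guaranteed by the commutation assumption $(a_k - a_\ell)(\pi_{\ell k} \circ \phi) \equiv 0$, as recorded in the remark following Lemma \ref{lemma_conjugate}). By the definition \eqref{lambda_star} of $\lambda_*$, one has the pointwise matrix inequality $M(x) \geq \lambda_* I_{d_\pi}$ for every $x \in X$, and this upgrades in the usual way to the operator inequality $M \geq \lambda_*$ on $\H^{(\pi)}_j$.

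Putting the pieces together, one obtains
$$
(U_{\pi,j})^*[A, U_{\pi,j}] = (U_{\pi,j})^* M\;\!U_{\pi,j} \geq \lambda_* (U_{\pi,j})^* U_{\pi,j} = \lambda_*,
$$
which is a strict Mourre estimate on all of $\S^1$ with $a = \lambda_* > 0$ and $K = 0$. Theorem \ref{thm_unitary} then yields that $U_{\pi,j}$ has no singular spectrum in $\S^1$; since $\sigma(U_{\pi,j}) \subset \S^1$, this means that $U_{\pi,j}$ has purely absolutely continuous spectrum.

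I do not anticipate any serious obstacle: the regularity, the commutator formula and the Dini-type control have all been encapsulated in Lemma \ref{lemma_conjugate}. The only point that deserves to be stated carefully is the pointwise-to-global passage $M(x) \geq \lambda_* I_{d_\pi} \Rightarrow M \geq \lambda_*$, since this is precisely the step in which the positivity hypothesis $\lambda_* > 0$ is used to convert the algebraic identity $[A, U_{\pi,j}] = M\;\!U_{\pi,j}$ into an analytically useful coercivity bound.
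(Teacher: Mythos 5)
Your proposal is correct and follows essentially the same route as the paper: the paper likewise reduces everything to the operator inequality $M\ge\lambda_*$ (which it verifies explicitly by diagonalising the hermitian matrix $M(x)=U(x)^*D(x)U(x)$ and using the orthogonality relation $\langle\pi_{jm},\pi_{jk}\rangle_{\ltwo(G,\mu_G)}=\delta_{mk}(d_\pi)^{-1}$ — the concrete form of the pointwise-to-global passage you describe) and then combines Lemma \ref{lemma_conjugate}(c) with Theorem \ref{thm_unitary}. No gaps.
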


\begin{proof}
Since $M$ is hermitian matrix-valued, there exists a function $U:X\to\U(d_\phi)$ such
that $M=U^*DU$ with $D$ diagonal\;\!:
$$
D(x):=
\begin{pmatrix}
\lambda_1\big(M(x)\big) & & 0\\
& \ddots &\\
0 & & \lambda_{d_\pi}\big(M(x)\big)\\
\end{pmatrix},
\quad x\in X.
$$
Furthermore, one has the orthogonality relation
$
\big\langle\pi_{jm},\pi_{jk}\big\rangle_{\ltwo(G,\mu_G)}=\delta_{mk}(d_\pi)^{-1}
$.
Therefore, one obtains for $\varphi_k\in\ltwo(X,\mu_X)$ that
\begin{align*}
\left\langle\sum_{m=1}^{d_\pi}\varphi_m\otimes\pi_{jm},
M\sum_{k=1}^{d_\pi}\varphi_k\otimes\pi_{jk}\right\rangle_{\H^{(\pi)}_j}
&=\sum_{k,\ell=1}^{d_\pi}\big\langle\varphi_k,
M_{k\ell}\;\!\varphi_\ell\big\rangle_{\ltwo(X,\mu_X)}\;\!(d_\pi)^{-1}\\
&=\sum_{m=1}^{d_\pi}\left\langle\left(\sum_{k=1}^{d_\pi}U_{mk}\;\!\varphi_k\right),
\lambda_m\big(M(\;\!\cdot\;\!)\big)\left(\sum_{\ell=1}^{d_\pi}U_{m\ell}\;\!
\varphi_\ell\right)\right\rangle_{\ltwo(X,\mu_X)}(d_\pi)^{-1}\\
&\ge\lambda_*\sum_{m=1}^{d_\pi}\left\langle\left(\sum_{k=1}^{d_\pi}U_{mk}
\;\!\varphi_k\right),\left(\sum_{\ell=1}^{d_\pi}U_{m\ell}\;\!\varphi_\ell\right)
\right\rangle_{\ltwo(X,\mu_X)}(d_\pi)^{-1}\\
&=\lambda_*\left\langle\sum_{m=1}^{d_\pi}\varphi_m\otimes\pi_{jm},
\sum_{k=1}^{d_\pi}\varphi_k\otimes\pi_{jk}\right\rangle_{\H^{(\pi)}_j},
\end{align*}
which is equivalent to the inequality $M\ge\lambda_*$. We thus infer from Lemma
\ref{lemma_conjugate}(c) that $U_{\pi,j}\in C^{1+0}(A)$ with
$$
(U_{\pi,j})^*[A,U_{\pi,j}]=(U_{\pi,j})^*MU_{\pi,j}\ge\lambda_*.
$$
It follows from Theorem \ref{thm_unitary} that $U_{\pi,j}$ has purely absolutely
continuous spectrum.
\end{proof}

Sometimes (as when $F_1$ is uniquely ergodic), it is 	advantageous to replace the
positivity condition $\lambda_*>0$ of Theorem \ref{first_theorem} by an averaged
positivity condition more likely to be satisfied. For this, we have to modify the
conjugate operator $A$. Following the approach of \cite[Sec.~4]{FRT12} and
\cite[Sec.~2]{Tie13}, we use the operator $A_N$ obtained by averaging the operator
$A$ along the flow generated by $U_{\pi,j}:$
\begin{equation}\label{second_operator}
A_N\;\!\varphi
:=\frac1N\sum_{n=0}^{N-1}(U_{\pi,j})^nA\;\!(U_{\pi,j})^{-n}\varphi,
\quad N\in\N_{\ge1},~\varphi\in\dom(A_N):=\dom(A),
\end{equation}
(the operator $A_N$ is self-adjoint on $\dom(A_N)=\dom(A)$ because
$(U_{\pi,j})^n\in C^1(A)$ for each $n\in\Z$, see \cite[Sec.~4]{FRT12}). In such a
case, the averages
\begin{equation}\label{matrix_M_N}
M_N:=\frac1N\sum_{n=0}^{N-1}
\big(\pi\circ\phi^{(n)}\big)(M\circ F_n)\big(\pi^*\circ\phi^{(n)}\big)
\end{equation}
of the operator $M$ appear, and we thus use the notation
\begin{equation}\label{lambda_star_N}
\lambda_{*,N}:=\inf_{k\in\{1,\ldots,d_\pi\},\,x\in X}\,\lambda_k\big(M_N(x)\big).
\end{equation}
Note that if the matrix-valued function $\pi\circ\phi$ is diagonal, then
$\big(\pi\circ\phi^{(n)}\big)$, $(M\circ F_n)$ and $\big(\pi^*\circ\phi^{(n)}\big)$
are also diagonal and $M_N$ reduces to the Birkhoff sum
$$
M_N=\frac1N\sum_{n=0}^{N-1}M\circ F_n.
$$

\begin{Theorem}[Second Mourre estimate for $U_{\pi,j}$]\label{second_theorem}
Let $\phi$ satisfy Assumption \ref{ass_phi}, suppose that
$(a_k-a_\ell)(\pi_{\ell k}\circ\phi)\equiv0$ for all $k,\ell\in\{1,\ldots,d_\pi\}$,
and assume that $\lambda_{*,N}>0$ for some $N\in\N_{\ge1}$. Then, $U_{\pi,j}$
satisfies the strict Mourre estimate
$$
(U_{\pi,j})^*[A_N,U_{\pi,j}]\ge\lambda_{*,N},
$$
and $U_{\pi,j}$ has purely absolutely continuous spectrum.
\end{Theorem}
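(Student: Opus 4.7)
The plan is to mimic the proof of Theorem \ref{first_theorem} with $A$ replaced by its dynamical average $A_N$ and $M$ by $M_N$. Once one shows $U_{\pi,j}\in C^{1+0}(A_N)$ and identifies the commutator as $[A_N,U_{\pi,j}]=M_NU_{\pi,j}$, the diagonalization/positivity argument and the invocation of Theorem \ref{thm_unitary} transfer almost verbatim. Lemma \ref{lemma_conjugate}(c) already gives $U_{\pi,j}\in C^{1+0}(A)$, and the general averaging formalism in \cite[Sec.~4]{FRT12} (the same machinery used to deduce the self-adjointness of $A_N$ on $\dom(A)$) propagates this Dini regularity from $A$ to $A_N$; this is the one non-trivial analytic ingredient and is the main obstacle of the proof.

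Granting $C^{1+0}(A_N)$-regularity, the commutator is obtained by a short telescoping. Because $U_{\pi,j}$ commutes with each of its powers,
\begin{equation*}
[A_N,U_{\pi,j}]=\frac1N\sum_{n=0}^{N-1}(U_{\pi,j})^n[A,U_{\pi,j}](U_{\pi,j})^{-n}=\left(\frac1N\sum_{n=0}^{N-1}(U_{\pi,j})^nM(U_{\pi,j})^{-n}\right)U_{\pi,j},
\end{equation*}
using $[A,U_{\pi,j}]=MU_{\pi,j}$. To identify the bracketed average with $M_N$, I would use the explicit description of $U_{\pi,j}$ on $\H^{(\pi)}_j$: under the isomorphism \eqref{eq_iso}, $(U_{\pi,j})^n$ is matrix multiplication by $\pi\circ\phi^{(n)}$ composed with $V_n$, and its inverse is $V_{-n}$ composed with $\pi^*\circ\phi^{(n)}$. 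Combined with the intertwining $V_nMV_{-n}=M\circ F_n$, this gives $(U_{\pi,j})^nM(U_{\pi,j})^{-n}=(\pi\circ\phi^{(n)})(M\circ F_n)(\pi^*\circ\phi^{(n)})$, which is exactly the $n$-th summand in \eqref{matrix_M_N}. Hence $[A_N,U_{\pi,j}]=M_NU_{\pi,j}$.

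For each $x\in X$, $M_N(x)$ is a convex combination of unitary conjugates of the hermitian matrices $M(F_n(x))$, hence itself hermitian, with eigenvalues bounded below by $\lambda_{*,N}$ in view of \eqref{lambda_star_N}. Rerunning the diagonalization-plus-orthogonality computation from the proof of Theorem \ref{first_theorem} with $M_N$ in place of $M$ yields the operator inequality $M_N\ge\lambda_{*,N}$ on $\H^{(\pi)}_j$, and conjugating by the unitary $U_{\pi,j}$ produces
\begin{equation*}
(U_{\pi,j})^*[A_N,U_{\pi,j}]=(U_{\pi,j})^*M_NU_{\pi,j}\ge\lambda_{*,N},
\end{equation*}
which is a strict Mourre estimate on all of $\S^1$. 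Theorem \ref{thm_unitary} then rules out singular spectrum, so $U_{\pi,j}$ has purely absolutely continuous spectrum.
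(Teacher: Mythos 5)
Your proposal is correct and follows essentially the same route as the paper: it invokes Lemma \ref{lemma_conjugate}(c) together with the averaging lemma of \cite[Sec.~4]{FRT12} to get $U_{\pi,j}\in C^{1+0}(A_N)$ and the averaged commutator formula, identifies $(U_{\pi,j})^nM(U_{\pi,j})^{-n}$ with multiplication by $\big(\pi\circ\phi^{(n)}\big)(M\circ F_n)\big(\pi^*\circ\phi^{(n)}\big)$ so that $[A_N,U_{\pi,j}]=M_N\;\!U_{\pi,j}$, and then reruns the hermitian-diagonalization positivity argument of Theorem \ref{first_theorem} before applying Theorem \ref{thm_unitary}. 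No gaps.
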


\begin{proof}
We know from Lemma \ref{lemma_conjugate}(c) that $U_{\pi,j}\in C^{1+0}(A)$. So, it
follows from the abstract result \cite[Lemma~4.1]{FRT12} that
$U_{\pi,j}\in C^{1+0}(A_N)$ with
$
[A_N,U_{\pi,j}]
=\frac1N\sum_{n=0}^{N-1}(U_{\pi,j})^n\;\![A,U_{\pi,j}](U_{\pi,j})^{-n}
$.
Using the equality $[A,U_{\pi,j}]=MU_{\pi,j}$, one thus obtains that
$$
[A_N,U_{\pi,j}]
=\left(\frac1N\sum_{n=0}^{N-1}(U_{\pi,j})^nM(U_{\pi,j})^{-n}\right)U_{\pi,j}
$$
with
\begin{align*}
(U_{\pi,j})^nM(U_{\pi,j})^{-n}\sum_{k=1}^{d_\pi}\varphi_k\otimes\pi_{jk}
&=(U_{\pi,j})^n\sum_{k,\ell,m=1}^{d_\pi}M_{\ell m}(\varphi_k\circ F_{-n})
\big(\pi_{mk}\circ\phi^{(-n)}\big)\otimes\pi_{j\ell}\\
&=\sum_{k,\ell,m,p=1}^{d_\pi}(M_{\ell m}\circ F_n)
\big(\pi_{mk}\circ\phi^{(-n)}\circ F_n\big)
\big(\pi_{p\ell}\circ\phi^{(n)}\big)\varphi_k\otimes\pi_{jp}\\
&=\sum_{k,\ell,m,p=1}^{d_\pi}(M_{\ell m}\circ F_n)\big(\pi^*_{mk}\circ\phi^{(n)}\big)
\big(\pi_{p\ell}\circ\phi^{(n)}\big)\varphi_k\otimes\pi_{jp}\\
&=\sum_{k,p=1}^{d_\pi}\big\{\big(\pi\circ\phi^{(n)}\big)
(M\circ F_n)\big(\pi^*\circ\phi^{(n)}\big)\big\}_{pk}
\;\!\varphi_k\otimes\pi_{jp}
\end{align*}
for $\varphi_k\in\ltwo(X,\mu_X)$. Thus, $U_{\pi,j}\in C^{1+0}(A_N)$ with
$[A_N,U_{\pi,j}]=M_N\;\!U_{\pi,j}$. Since $M_N(x)$ is a hermitian matrix for each
$x\in X$, one can then conclude using the same argument as in the proof of Theorem
\ref{first_theorem}.
\end{proof}

\begin{Remark}[Relation with the topological degree]\label{Rem_degree}
The matrix-valued function $M_N$, which came out from a commutator calculation, is
related to the notion of topological degree of the cocycle $\phi$ in the
representation $\pi$. Indeed, the assumption
$(a_k-a_\ell)(\pi_{\ell k}\circ\phi)\equiv0$ for all $k,\ell\in\{1,\ldots,d_\pi\}$
implies that the matrices $\pi\circ\phi^{(n)}$ and
$D_a:={\sf diag}(a_1,\ldots,a_{d_\pi})$ commute. Thus, one has the equalities
\begin{align*}
M_N
&=D_a\;\!\frac1N\sum_{n=0}^{N-1}
\big(\pi\circ\phi^{(n)}\big)\;\!\L_Y(\pi\circ\phi\circ F_n)(\pi^*\circ\phi\circ F_n)
\big(\pi^*\circ\phi^{(n)}\big)\\
&=D_a\;\!\frac1N\sum_{n=0}^{N-1}
(\pi\circ\phi)\cdots(\pi\circ\phi\circ F_{n-1})
\;\!\L_Y(\pi\circ\phi\circ F_n)(\pi^*\circ\phi\circ F_n)
(\pi^*\circ\phi\circ F_{n-1})\cdots(\pi^*\circ\phi)\\
&=D_a\;\!\frac1N\;\!\L_Y\big((\pi\circ\phi)^{(N)}\big)
\big((\pi\circ\phi)^{(N)}\big)^*,
\end{align*}
and $M_N$ is the product of $D_a\frac1N$ times the matrix-valued function
$\L_Y\big((\pi\circ\phi)^{(N)}\big)\big((\pi\circ\phi)^{(N)}\big)^*$, which can be
associated to the winding number of the curve $(\pi\circ\phi)^{(N)}$ in the unitary
group $\U(d_\pi)\equiv{\sf codomain}(\pi)$ (the Lie derivative $\L_Y$ replaces the
complex derivative of the scalar case). It follows that the limit
$\lim_{N\to\infty}M_N$ (if it exists, in some topology to be specified) can be
interpreted as the matrix topological degree of $\pi\circ\phi$, up to multiplication
by the constant matrix $D_a$.

This furnishes an alternative interpretation to the result of Theorem
\ref{second_theorem}\;\!: If $N$ is large enough, then $M_N$ is close to $D_a$ times
the topological degree of $\pi\circ\phi$. So, the condition $\lambda_{*,N}>0$ means
that the topological degree of $\pi\circ\phi$ has nonzero determinant, and Theorem
\ref{second_theorem} tells us that in this case $U_\phi$ has purely absolutely
continuous spectrum in the subspace associated to $\pi$. This is nothing else but a
local version, in each representation $\pi$, of the result (already known in various
cases, see \cite{Anz51,Fra00,Fra00_2,GLL91,ILR93,Wys04}) that the continuous
component of spectrum of skew products is purely absolutely continuous if $\phi$ is
regular enough and has nonzero topological degree. The main novelty here is that $X$
and $G$ are general compact Lie groups.
\end{Remark}

We conclude the section by noting that in the particular case where
$\{F_t\}_{t\in\R}$ is a translation flow on a torus $X=\T^d\simeq\R^d/\Z^d$, $d\ge1$,
with $F_1$ ergodic along one coordinate, the spectrum of $U_{\pi,j}$ is Lebesgue if
it is purely absolutely continuous. Indeed, assume that
$$
F_t(x):=x+ty~\hbox{(mod $\Z^d$)},\quad t\in\R,~x\in\T^d,
$$
for some $y:=(y_1,\ldots,y_d)\in\R^d$ with $y_{k_0}\in\R\setminus\Q$ for some
$k_0\in\{1,\ldots,d\}$. Let $Q_{k_0}\in\B\big(\H^{(\pi)}_j\big)$ be the unitary
operator given by
$$
\left(Q_{k_0}\sum_{k=1}^{d_\pi}\varphi_k\otimes\pi_{jk}\right)(x,g)
:=\e^{2\pi ix_{k_0}}\sum_{k=1}^{d_\pi}\varphi_k(x)\;\!\pi_{jk}(g),
\quad\varphi_k\in\ltwo(X,\mu_X),~(x,g)\in X\times G,
$$
and let $T_0:\T\to\T$ be the ergodic translation given by $T_0(z):=z+y_{k_0}$.
Finally, denote by $\sigma_\psi$ the spectral measure of $U_{\pi,j}$ associated to a
vector $\psi\in\H^{(\pi)}_j$; that is, the Borel measure on $\T$ defined by the
equalities
$$
\big(\F\sigma_\psi\big)(-m)
=\int_\T\e^{2\pi imz}\d\sigma_\psi(z)
=\big\langle(U_{\pi,j})^m\psi,\psi\big\rangle,\quad m\in\Z,
$$
with $\F$ the Fourier transform. Then, we have the identities
$$
(U_{\pi,j})^m\;\!Q_{k_0}=\e^{2\pi imy_{k_0}}Q_{k_0}(U_{\pi,j})^m
$$
and
$$
\int_\T\e^{2\pi imz}\d\sigma_{Q_{k_0}\psi}(z)
=\big\langle(U_{\pi,j})^mQ_{k_0}\psi,Q_{k_0}\psi\big\rangle_{\H^{(\pi)}_j}
=\e^{-2\pi imy_{k_0}}\big\langle(U_{\pi,j})^m\psi,\psi\big\rangle_{\H^{(\pi)}_j}
=\int_\T\e^{2\pi imz}\d(T_0^*\sigma_\psi)(z)
$$
for all $m\in\Z$ and $\psi\in\H^{(\pi)}_j$. Thus,
$\sigma_{Q_{k_0}\psi}=T_0^*\sigma_\psi$, and one has the following result\;\!:

\begin{Lemma}\label{Lemma_Lebesgue}
Assume that $X=\T^d$, $d\ge1$, and let $\{F_t\}_{t\in\R}$ be given by
$$
F_t(x):=x+ty~\hbox{(mod $\Z^d$)},\quad t\in\R,~x\in\T^d,
$$
for some $y:=(y_1,\ldots,y_d)\in\R^d$ with $y_{k_0}\in\R\setminus\Q$ for some
$k_0\in\{1,\ldots,d\}$. Then, the spectrum of $U_{\pi,j}$ is Lebesgue if it is purely
absolutely continuous.
\end{Lemma}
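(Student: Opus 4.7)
The plan is to leverage the intertwining identity $\sigma_{Q_{k_0}\psi}=T_0^*\sigma_\psi$ already established just above the lemma, combined with the ergodicity of the irrational rotation $T_0$, to show that the maximal spectral type of $U_{\pi,j}$ is equivalent to Lebesgue measure on $\S^1\simeq\T$. Together with the purely absolutely continuous assumption, this gives Lebesgue spectrum.

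First I would record the consequence of $\sigma_{Q_{k_0}\psi}=T_0^*\sigma_\psi$ at the level of the maximal spectral type $\sigma_{\max}$ of $U_{\pi,j}$ on $\H^{(\pi)}_j$. Since $Q_{k_0}$ is unitary, the map $\psi\mapsto Q_{k_0}\psi$ is a bijection of $\H^{(\pi)}_j$, so the family of spectral measures $\{\sigma_\psi\}_{\psi\in\H^{(\pi)}_j}$ is invariant under the pushforward $T_0^*$. Consequently the equivalence class of $\sigma_{\max}$ is preserved by $T_0^*$; that is, $T_0^*\sigma_{\max}\sim\sigma_{\max}$.

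Next, assume the spectrum of $U_{\pi,j}$ is purely absolutely continuous, so $\sigma_{\max}\ll\lambda_\T$, where $\lambda_\T$ denotes the Lebesgue (i.e. normalised Haar) measure on $\T$. Writing $\d\sigma_{\max}=f\,\d\lambda_\T$ with $f\ge0$ and using that $T_0$ preserves $\lambda_\T$ (it is a translation), one gets $\d(T_0^*\sigma_{\max})=(f\circ T_0^{-1})\,\d\lambda_\T$. The equivalence $T_0^*\sigma_{\max}\sim\sigma_{\max}$ then forces the set $\{f>0\}$ to coincide with $\{f\circ T_0^{-1}>0\}=T_0(\{f>0\})$ modulo $\lambda_\T$-null sets; i.e. $\{f>0\}$ is $T_0$-invariant mod $\lambda_\T$.

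Finally, since $y_{k_0}\in\R\setminus\Q$, the translation $T_0$ is ergodic on $(\T,\lambda_\T)$, so $\lambda_\T(\{f>0\})\in\{0,1\}$. The first option would force $\sigma_{\max}=0$ and hence $\H^{(\pi)}_j=\{0\}$, in which case the claim is vacuous. In the nontrivial case $\lambda_\T(\{f>0\})=1$, the measure $\sigma_{\max}$ is equivalent to $\lambda_\T$, and this, combined with pure absolute continuity, means precisely that the spectrum of $U_{\pi,j}$ is Lebesgue. The whole argument is short: the only genuinely nontrivial ingredient is the ergodicity of the irrational rotation, which is classical, so I do not anticipate a main obstacle beyond cleanly invoking the already-derived intertwining identity.
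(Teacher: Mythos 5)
Your proposal is correct and follows exactly the route the paper takes: the paper's proof consists of invoking the identity $\sigma_{Q_{k_0}\psi}=T_0^*\sigma_\psi$ together with the ergodicity of $T_0$, deferring the details to the cited references, and those details are precisely the maximal-spectral-type invariance and ergodicity argument you spell out. (Only a cosmetic remark: with the paper's convention $T_0^*$ acts as $\sigma\mapsto\sigma\circ T_0$, so the density transforms as $f\circ T_0$ rather than $f\circ T_0^{-1}$, which changes nothing in the argument.)
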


\begin{proof}
The claim follows from the identity $\sigma_{Q_{k_0}\psi}=T_0^*\sigma_\psi$ and the
ergodicity of $T_0$ (see the proof of \cite[Lemma~3]{ILR93} or
\cite[Lemma~3.1]{Fra00_2} for details).
\end{proof}

\section{Examples}\label{Sec_examples}
\setcounter{equation}{0}

\subsection{The abelian case $X=\T^d$ and $G=\T^{d'}$}\label{Sec_abelian}

Suppose that $X=\T^d$ and $G=\T^{d'}$ for some $d,d'\ge1$, set
$\H:=\ltwo\big(\T^d\times\T^{d'},\mu_{\T^d}\otimes\mu_{\T^{d'}}\big)$, and let
$\{F_t\}_{t\in\R}$ be the translation flow on $\T^d$ given by
$$
F_t(x):=x+ty~\hbox{(mod $\Z^d$)},\quad t\in\R,~x\in\T^d,
$$
for some $y:=(y_1,y_2,\ldots,y_d)\in\R^d$. Then, each element
$\chi_q\in\widehat{\T^{d'}}$ is a $1$-dimensional IUR (character) of $\T^{d'}$ given
by $\chi_q(z):=\e^{2\pi iq\cdot z}$ for some $q\in\Z^{d'}$. One has
$$
\H^{(\chi_q)}_1
=\big\{\varphi\otimes\chi_q\mid\varphi\in\ltwo\big(\T^d,\mu_{\T^d}\big)\big\},
$$
the Lie derivative $\L_Y$ is given by $\L_Y=y\cdot\nabla_x$, and $F_1$ is uniquely
ergodic if and only if the numbers $y_1,y_2,\ldots,y_d,1$ are rationally independent.

Given $q\in\Z^{d'}$, we choose the function $\phi\in C(\T^d;\T^{d'})$ as follows\;\!:

\begin{Assumption}\label{ass_phi_abelian}
The function $\phi\in C(\T^d;\T^{d'})$ satisfies $\phi=\xi+\eta$, where
\begin{enumerate}
\item[(i)] $\xi:\T^d\to\T^{d'}$ is a Lie group homomorphism; that is, $\xi$ is given
by $\xi(x):=Bx$ (mod $\Z^{d'}$) for some $d'\times d$ matrix $B$ with integer
entries,
\item[(ii)] $\eta\in C(\T^d;\T^{d'})$ is such that $\L_Y(q\cdot\eta)$ exists and
satisfies the Dini-type condition
\begin{equation}\label{eq_Dini}
\int_0^1\frac{\d t}t\,\big\|\L_Y(q\cdot\eta)\circ F_t
-\L_Y(q\cdot\eta)\big\|_{\linf(\T^d)}
<\infty.
\end{equation}
\end{enumerate}
\end{Assumption}

Then, the function $\phi$ satisfies Assumption \ref{ass_phi}, the skew product
$T_\phi$ is given by
$$
T_\phi(x,g)=\big(x+y,g+\phi(x)\big),\quad(x,g)\in\T^d\times\T^{d'},
$$
and the matrix-valued function $M$ defined in \eqref{formula_M} reduces to the scalar
function
$$
M=-ia_1\;\!\L_Y(\chi_q\circ\phi)\cdot(\overline{\chi_q}\circ\phi)
=2\pi a_1\;\!\big(\L_Y(q\cdot\xi)+\L_Y(q\cdot\eta)\big)
=2\pi a_1\;\!\big(y\cdot(B^{\sf T}q)+\L_Y(q\cdot\eta)\big).
$$
Accordingly, the matrix-valued function $M_N$ defined in \eqref{matrix_M_N}
reduces to the scalar function
$$
M_N
=\frac1N\sum_{n=0}^{N-1}M\circ F_n
=2\pi a_1\left(y\cdot(B^{\sf T}q)+
\frac1N\sum_{n=0}^{N-1}\L_Y(q\cdot\eta)\circ F_n\right).
$$
So, if $B^{\sf T}q\ne0$ and if the numbers $y_1,y_2,\ldots,y_d,1$ are rationally
independent, one has $y\cdot(B^{\sf T}q)\ne0$. Thus, one can set
$a_1:=\big(2\pi y\cdot(B^{\sf T}q)\big)^{-1}$, so that $M_N$ takes the form 
\begin{equation}\label{abelian_M_N}
M_N=1+\big(y\cdot(B^{\sf T}q)\big)^{-1}
\left(\frac1N\sum_{n=0}^{N-1}\L_Y(q\cdot\eta)\circ F_n\right).
\end{equation}

Collecting what precedes, one obtains the following result on the spectrum of the
operators $U_{\chi_q,1}$ and $U_\phi$ associated to the skew product $T_\phi$.

\begin{Theorem}\label{thm_abelian}
Let $\phi$ satisfy Assumption \ref{ass_phi_abelian}, suppose that $B^{\sf T}q\ne0$,
and assume that $y_1,y_2,\ldots,y_d,1$ are rationally independent. Then,
$U_{\chi_q,1}$ has purely Lebesgue spectrum. In particular, if $\phi$ satisfies
Assumption \ref{ass_phi_abelian} for each $q\in\Z^{d'}$, then the restriction of
$U_\phi$ to the subspace
$
\bigoplus_{q\in\Z^{d'}\!,\,B^{\sf T}q\ne0}\H^{(\chi_q)}_1\subset\H
$
has countable Lebesgue spectrum.
\end{Theorem}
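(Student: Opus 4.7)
The plan is to verify, for the one-dimensional irreducible representation $\pi=\chi_q$ and index $j=1$, all the hypotheses of Theorem \ref{second_theorem} and then invoke Lemma \ref{Lemma_Lebesgue} to upgrade absolute continuity to Lebesgue type. First I would observe that Assumption \ref{ass_phi} for $\pi=\chi_q$ reduces to the single Dini-type condition on $\L_Y(q\cdot\eta)$, which is precisely Assumption \ref{ass_phi_abelian}(ii); and since $d_\pi=1$, the commutation condition $(a_k-a_\ell)(\pi_{\ell k}\circ\phi)\equiv0$ is vacuous. The numerical constant $a_1:=\big(2\pi y\cdot(B^{\sf T}q)\big)^{-1}$ is well defined because $B^{\sf T}q\in\Z^d\setminus\{0\}$ and the rational independence of $y_1,\ldots,y_d,1$ prevents $y\cdot k$ from vanishing on any nonzero integer vector $k$; this is the step that uses both standing hypotheses of the theorem simultaneously.

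With this choice of $a_1$, formula \eqref{abelian_M_N} gives
$$
M_N=1+\big(y\cdot(B^{\sf T}q)\big)^{-1}\,\frac1N\sum_{n=0}^{N-1}\L_Y(q\cdot\eta)\circ F_n,
$$
and the whole argument reduces to showing that the Birkhoff average on the right tends uniformly to zero as $N\to\infty$. The rational independence assumption makes $F_1$ a uniquely ergodic translation on $\T^d$, so that Birkhoff averages of continuous functions converge uniformly to their spatial mean. Thus, if we can show that $\L_Y(q\cdot\eta)$ is a continuous function with zero $\mu_{\T^d}$-mean, then $M_N\to1$ uniformly and consequently $\lambda_{*,N}>0$ for all sufficiently large $N$.

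The mean-zero property is the main point requiring care, and it is where the decomposition $\phi=\xi+\eta$ is designed to help: since $\xi$ already carries the ``linear'' (integer matrix) part of $\phi$, the residual cocycle $\eta$ is to be thought of as having zero topological degree, so that $q\cdot\eta$ lifts to a continuous $\R$-valued function on $\T^d$ and $\L_Y(q\cdot\eta)=y\cdot\nabla(q\cdot\eta)$ integrates to zero against the invariant measure by the fundamental theorem of calculus on the torus (equivalently, by the fact that $H$ annihilates constants and is skew-adjoint). Continuity of $\L_Y(q\cdot\eta)$ is extracted from the Dini-type condition, which gives strong continuity of the orbit $t\mapsto\L_Y(q\cdot\eta)\circ F_t$ in $\linf(\T^d)$ at $t=0$. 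The hard part in writing this cleanly will be to justify that one can interpret $\L_Y(q\cdot\eta)$ as the Lie derivative of a genuine $\R$-valued function on $\T^d$, which is where the implicit zero-degree interpretation of $\eta$ enters.

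Once $\lambda_{*,N}>0$ is established for some $N$, Theorem \ref{second_theorem} yields that $U_{\chi_q,1}$ has purely absolutely continuous spectrum, and Lemma \ref{Lemma_Lebesgue} then upgrades this immediately to Lebesgue spectrum, using the translation flow structure and the nontrivial irrational coordinate $y_{k_0}$ provided by rational independence. For the second assertion, I would decompose the invariant subspace $\bigoplus_{q\in\Z^{d'}\!,\,B^{\sf T}q\ne0}\H^{(\chi_q)}_1$ as an orthogonal sum of the $U_{\chi_q,1}$'s; each summand has Lebesgue spectrum by the first part, the index set $\{q\in\Z^{d'}:B^{\sf T}q\ne0\}$ is countably infinite (as $B\neq0$), and the countable direct sum of Lebesgue spectra is of countable Lebesgue type, as required.
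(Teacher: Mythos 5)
Your proposal is correct and follows essentially the same route as the paper: verify Assumption \ref{ass_phi} and the (trivially satisfied, since $d_{\chi_q}=1$) commutation condition, choose $a_1=\big(2\pi y\cdot(B^{\sf T}q)\big)^{-1}$ (legitimate by $B^{\sf T}q\in\Z^d\setminus\{0\}$ and rational independence), use unique ergodicity of $F_1$ to get $M_N\to1$ uniformly and hence $\lambda_{*,N}>0$ for $N$ large, and conclude via Theorem \ref{second_theorem} and Lemma \ref{Lemma_Lebesgue}, with the final countable Lebesgue claim obtained from the countable orthogonal decomposition. The mean-zero step you single out as delicate is simply asserted in the paper's proof ($\int_{\T^d}\L_Y(q\cdot\eta)\,\d\mu_{\T^d}=0$, i.e.\ $\eta$ is implicitly understood to carry no residual homomorphism part beyond $\xi$), so your explicit attention to that point is, if anything, more careful than the published argument.
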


\begin{proof}
We know that $\phi$ satisfies Assumption \ref{ass_phi}, and it is obvious that
$(a_1-a_1)(\chi_q\circ\phi)\equiv0$. Furthermore, due to the unique ergodicity of
$F_1$, we infer from \eqref{abelian_M_N} that
\begin{align*}
\lim_{N\to\infty}M_N
&=1+\big(y\cdot(B^{\sf T}q)\big)^{-1}\lim_{N\to\infty}
\left(\frac1N\sum_{n=0}^{N-1}\L_Y(q\cdot\eta)\circ F_n\right)\\
&=1+\big(y\cdot(B^{\sf T}q)\big)^{-1}\int_{\T^d}\d\mu_{\T^d}\,\L_Y(q\cdot\eta)\\
&=1
\end{align*}
uniformly on $\T^d$. Therefore,
$$
\lambda_{*,N}
=\inf_{x\in\T^d}\lambda_1\big(M_N(x)\big)
=\inf_{x\in\T^d}M_N(x)
>0
$$
if $N$ is large enough. So, it follows from Theorem \ref{second_theorem} and Lemma
\ref{Lemma_Lebesgue} that $U_{\chi_q,1}$ has purely Lebesgue spectrum. The claim on
$U_\phi$ follows from what precedes if one takes into account the separability of the
Hilbert space
$\H\equiv\ltwo\big(\T^d\times\T^{d'},\mu_{\T^d}\otimes\mu_{\T^{d'}}\big)$.
\end{proof}

Theorem \ref{thm_abelian} is consistent with Corollary 4.5 of \cite{Tie13}, where the
same spectral result is obtained using a less general framework. We refer to the
discussion after \cite[Cor.~4.5]{Tie13} for a comparison with prior results on the
spectral analysis of skew products of tori.

\subsection{The case $X=\T^d$ and $G=\SU(2)$}\label{Sec_SU(2)}

Suppose that $X=\T^d$ for some $d\ge1$, let $G=\SU(2)$, set
$\H:=\ltwo\big(\T^d\times\SU(2),\mu_{\T^d}\otimes\mu_{\SU(2)}\big)$, let
$\{F_t\}_{t\in\R}$ be the translation flow on $\T^d$ given by
$$
F_t(x):=x+ty~\hbox{(mod $\Z^d$)},\quad t\in\R,~x\in\T^d,
$$
for some $y:=(y_1,y_2,\ldots,y_d)\in\R^d$, and let $\xi:\T^d\to\SU(2)$ be a Lie group
homomorphism. Then, one has $\L_Y=y\cdot\nabla_x$, the function $\xi$ satisfies
Assumption \ref{ass_phi}, and the skew product $T_\xi$ is given by
\begin{equation}\label{skew_SU_1}
T_\xi(x,g)=\big(x+y,g\;\!\xi(x)\big),\quad(x,g)\in\T^d\times\SU(2).
\end{equation}
Since $\T^d$ is abelian, the range of $\xi$ is contained in a maximal torus of
$\SU(2)$. But, all of these are mutually conjugate to the subgroup
$
\left\{\left(\begin{smallmatrix}
z & 0\\
0 & \overline z
\end{smallmatrix}\right)\right\}_{z\in\S^1}
$
(see \cite[Thm.~IV.1.6 \& Prop.~IV.3.1]{BtD85}). So, we can suppose without loss of
generality that
\begin{equation}\label{condition_xi}
\xi(x)=h
\begin{pmatrix}
\e^{2\pi i(b\cdot x)} & 0\\
0 & \e^{-2\pi i(b\cdot x)}
\end{pmatrix}
h^*,
\quad x\in\T^d,
\end{equation}
for some vector $b\in\Z^d$ and some element $h\in\SU(2)$, and thus that
\begin{equation}\label{eq_xi}
\big(\pi\circ\xi\big)(x)
=\pi(h)\;\!\pi
\begin{pmatrix}
\e^{2\pi i(b\cdot x)} & 0\\
0 & \e^{-2\pi i(b\cdot x)}
\end{pmatrix}
\big(\pi(h)\big)^*
\end{equation}
for each $\pi$, finite-dimensional IUR of $\SU(2)$.

The set $\widehat{\SU(2)}$ of all (equivalence classes of) finite-dimensional IUR's
of $\SU(2)$ can be described as follows (see \cite[Chap.~II]{Sug90}). For each
$n\in\N$, let $V_n$ be the $(n+1)$-dimensional vector space of homogeneous
polinomials of degree $n$ in the variables $z_1,z_2\in\C$. Endow $V_n$ with the basis
$$
p_k(z_1,z_2):=z_1^kz_2^{n-k},\quad k\in\{0,\ldots,n\},
$$
and the scalar product
$\langle\;\!\cdot\;\!,\;\!\cdot\;\!\rangle_{V_n}:V_n\times V_n\to\C$ defined by
$$
\left\langle\sum_{k=0}^n\alpha_k\;\!z_1^kz_2^{n-k},
\sum_{\ell=0}^n\beta_\ell\;\!z_1^\ell z_2^{n-\ell}\right\rangle_{V_n}
:=\sum_{k=0}^nk!(n-k)!\;\!\alpha_k\overline{\beta_k},
\quad\alpha_k,\beta_\ell\in\C.
$$
Then, the function $\pi^{(n)}:\SU(2)\to\U(V_n)\simeq\U(n+1)$ given by
$$
\big(\pi^{(n)}(g)p\big)(z_1,z_2):=p\big(g_{11}z_1+g_{21}z_2,g_{12}z_1+g_{22}z_2\big),
\quad g=
\begin{pmatrix}
g_{11} & g_{12}\\
g_{21} & g_{22}
\end{pmatrix}
\in\SU(2),~p\in V_n,
$$
defines a $(n+1)$-dimensional IUR of $\SU(2)$ on $V_n$, and each finite-dimensional
IUR of $\SU(2)$ is unitarily equivalent to an element of the family
$\{\pi^{(n)}\}_{n\in\N}$ \cite[Prop.~II.1.1 \& Thm.~II.4.1]{Sug90}. A calculation
using the binomial theorem shows that the matrix elements $\pi_{jk}^{(n)}$ of
$\pi^{(n)}$ with respect to the basis $\{p_k\}_{k=0}^n$ satisfy
$$
\pi_{jk}^{(n)}(g)
:=\big\langle p_j,\pi^{(n)}(g)p_k\big\rangle_{V_n}
=j\;\!!(n-j)!\sum_{\ell=0}^k
\begin{pmatrix}
k\\
\ell
\end{pmatrix}
\begin{pmatrix}
n-k\\
j-\ell
\end{pmatrix}
g_{11}^\ell g_{12}^{j-\ell}g_{21}^{k-\ell}g_{22}^{n+\ell-k-j},
\quad j,k\in\{0,\ldots,n\},
$$
with $\big(\begin{smallmatrix}\cdot\\\cdot\end{smallmatrix}\big)$ the binomial
coefficients. In the particular case of diagonal elements
$
g=
\left(\begin{smallmatrix}
g_{11} & 0\\
0 & \overline{g_{11}}
\end{smallmatrix}\right)
\in\SU(2)
$,
we thus get that
\begin{equation}\label{diag_elts}
\pi_{jk}^{(n)}
\begin{pmatrix}
g_{11} & 0\\
0 & \overline{g_{11}}
\end{pmatrix}
=j\;\!!(n-j)!\;\!g_{11}^{2j-n}\delta_{jk},
\quad\delta_{jk}:=
\begin{cases}
1 & \hbox{if}~~j=k\\
0 & \hbox{if}~~j\ne k.
\end{cases}
\end{equation}
Therefore, by replacing $\pi^{(n)}(\;\!\cdot\;\!)$ by the unitarily equivalent
representation ${\big(\pi^{(n)}(h)\big)^*\pi^{(n)}(\;\!\cdot\;\!)\pi^{(n)}(h)}$, we
infer from \eqref{eq_xi} that
$$
\big(\pi_{jk}^{(n)}\circ\xi\big)(x)
=j\;\!!(n-j)!\e^{2\pi i(2j-n)(b\cdot x)}\delta_{jk}.
$$
Then, putting this expression in the formula \eqref{formula_M} for the matrix-valued
function $M$, one obtains that
\begin{align}
M
&=-i
\begin{pmatrix}
a_0 && 0\\
& \ddots &\\
0 && a_n
\end{pmatrix}
\big(\L_Y(\pi^{(n)}\circ\xi)\big)
\cdot\big((\pi^{(n)})^*\circ\xi\big)\nonumber\\
&=-i
\begin{pmatrix}
a_0 && 0\\
& \ddots &\\
0 && a_n
\end{pmatrix}
\big(\pi^{(n)}\circ\xi\big)
\cdot\frac\d{\d t}\big(\pi^{(n)}\circ\xi\big)(ty)\Big|_{t=0}
\cdot\big((\pi^{(n)})^*\circ\xi\big)\nonumber\\
&=2\pi(y\cdot b)
\begin{pmatrix}
a_0\;\!0!\;\!(n-0)!\;\!(2\cdot0-n) && 0\\
& \ddots &\\
0 && a_n\;\!n!\;\!(n-n)!\;\!(2\cdot n-n)
\end{pmatrix}.\label{calc_M}
\end{align}
In the case $y\cdot b\ne0$, we can set
$a_j:=(2j-n)\big(2\pi(y\cdot b)\;\!j\;\!!(n-j)!\big)^{-1}$, and thus obtain that
$$
M_{jk}=(2j-n)^2\;\!\delta_{jk},
$$
which (in view of Equation \eqref{lambda_star}) implies that
$$
\lambda_*
=\inf_{k\in\{0,\ldots,n\}}(2k-n)^2
=
\begin{cases}
0 & \hbox{if}~~n\in2\N\\
1 & \hbox{if}~~n\in2\N+1.
\end{cases}
$$

Collecting what precedes, one ends up with the following result on the spectrum of
the operators $U_{\pi^{(n)}\!,j}$ and $U_\xi$ associated to the skew product $T_\xi$.

\begin{Lemma}\label{first_lemma_SU(2)}
Let $\xi$ satisfy \eqref{condition_xi} with $y\cdot b\ne0$, and take $n\in2\N+1$ and
$j\in\{0,\ldots,n\}$. Then, $U_{\pi^{(n)}\!,j}$ has purely absolutely continuous
spectrum. In particular, the restriction of $U_\xi$ to the subspace
$
\bigoplus_{n\in2\N+1}\bigoplus_{j=0}^n\H^{(\pi^{(n)})}_j\subset\H
$
has purely absolutely continuous spectrum.
\end{Lemma}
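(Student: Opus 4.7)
The plan is to verify the three hypotheses of Theorem \ref{first_theorem} for $U_{\pi^{(n)}\!,j}$ and then read off the conclusion. All the heavy computational work has already been done in the preceding paragraphs; what remains is to check the hypotheses in light of those computations and to assemble the direct-sum statement.

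First, I would verify Assumption \ref{ass_phi} for $\phi=\xi$. Since $\xi$ is a Lie group homomorphism $\T^d\to\SU(2)$ and $\pi^{(n)}$ is a smooth representation, the matrix entries $\pi^{(n)}_{k\ell}\circ\xi$ are $C^\infty$ functions on $\T^d$; hence their Lie derivatives along the translation flow are $C^\infty$, and the Dini-type integrability is trivial because the integrand is $O(t)$ as $t\to0$. Next, I would check the commutation condition $(a_k-a_\ell)(\pi^{(n)}_{\ell k}\circ\xi)\equiv0$ for the choice $a_j:=(2j-n)\big(2\pi(y\cdot b)\;\!j!(n-j)!\big)^{-1}$ made above \eqref{calc_M}. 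By \eqref{diag_elts} (applied after the unitary equivalence ${\pi^{(n)}(h)}^*\pi^{(n)}(\;\!\cdot\;\!)\pi^{(n)}(h)$, which does not alter the spectrum of $U_{\pi^{(n)}\!,j}$), the matrix-valued function $\pi^{(n)}\circ\xi$ is diagonal, so $\pi^{(n)}_{\ell k}\circ\xi\equiv0$ for $\ell\ne k$, which trivially forces $(a_k-a_\ell)(\pi^{(n)}_{\ell k}\circ\xi)\equiv0$.

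With these two hypotheses in place, the calculation culminating in \eqref{calc_M} shows that $M_{jk}=(2j-n)^2\delta_{jk}$, whence
$$
\lambda_*
=\inf_{k\in\{0,\ldots,n\},\,x\in\T^d}\lambda_k\big(M(x)\big)
=\inf_{k\in\{0,\ldots,n\}}(2k-n)^2
=1
$$
precisely because $n$ is odd (so $2k-n$ is a nonzero odd integer for every $k\in\{0,\ldots,n\}$). The hypothesis $y\cdot b\ne0$ is used exactly to ensure that the rescaling defining the $a_j$'s is legitimate. Theorem \ref{first_theorem} then delivers the strict Mourre estimate $(U_{\pi^{(n)}\!,j})^*[A,U_{\pi^{(n)}\!,j}]\ge1$ and, consequently, the pure absolute continuity of the spectrum of $U_{\pi^{(n)}\!,j}$.

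The second assertion follows by assembling these conclusions across $n\in2\N+1$ and $j\in\{0,\ldots,n\}$ via the orthogonal decomposition \eqref{eq_decompo}: the restriction of $U_\xi$ to $\bigoplus_{n\in2\N+1}\bigoplus_{j=0}^n\H^{(\pi^{(n)})}_j$ is the direct sum of operators with purely absolutely continuous spectrum, and such a direct sum is itself purely absolutely continuous. I do not foresee a real obstacle here: the only point deserving care is the passage to the unitarily equivalent representation in the first step, but since unitary equivalence preserves the spectral type of $U_{\pi^{(n)}\!,j}$, it is harmless.
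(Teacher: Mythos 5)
Your proposal is correct and follows essentially the same route as the paper: verify Assumption \ref{ass_phi} and the commutation condition (both immediate since $\pi^{(n)}\circ\xi$ is diagonal after the harmless unitary equivalence), note that $\lambda_*=\inf_k(2k-n)^2=1$ for odd $n$, and invoke Theorem \ref{first_theorem}, with the direct-sum statement following from the decomposition \eqref{eq_decompo}. The paper's proof is just a terser version of the same argument, citing the computations culminating in \eqref{calc_M}.
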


\begin{proof}
We know that $\xi$ satisfies Assumption \ref{ass_phi}, that
$(a_k-a_\ell)\big(\pi_{\ell k}^{(n)}\circ\xi\big)\equiv0$ for all
$k,\ell\in\{0,\ldots,n\}$, and that $\lambda_*=1$. So, the claim is a direct
consequence of Theorem \ref{first_theorem}.
\end{proof}

As in Section \ref{Sec_abelian}, we can treat more general cocycles $\phi$ (namely,
perturbations of group homomorphisms) if $F_1$ is uniquely ergodic. So, from now on,
we assume that $y_1,y_2,\ldots,y_d,1$ are rationally independent (so that $F_1$ is
uniquely ergodic) and we suppose that $\phi:\T^d\to\SU(2)$ is a perturbation of $\xi$
in the sense that
\begin{equation}\label{condition_phi_SU(2)}
\phi(x):=h
\begin{pmatrix}
\e^{2\pi i(b\cdot x+\eta(x))} & 0\\
0 & \e^{-2\pi i(b\cdot x+\eta(x))}
\end{pmatrix}
h^*,
\quad x\in\T^d,
\end{equation}
with $b\in\Z^d\setminus\{0\}$ and with $\eta\in C(\T^d;\R)$ satisfying the
following\;\!:

\begin{Assumption}\label{assumption_eta_SU(2)}
$\eta\in C(\T^d;\R)$ is such that $\L_Y\eta$ exists and satisfies the Dini-type
condition
$$
\int_0^1\frac{\d t}t\,\big\|\L_Y\eta\circ F_t-\L_Y\eta\big\|_{\linf(\T^d)}
<\infty.
$$
\end{Assumption}

So, we have
$$
\big(\pi^{(n)}\circ\phi\big)(x)
=\pi^{(n)}(h)\;\!\pi^{(n)}
\begin{pmatrix}
\e^{2\pi i(b\cdot x+\eta(x))} & 0\\
0 & \e^{-2\pi i(b\cdot x+\eta(x))}
\end{pmatrix}
\big(\pi^{(n)}(h)\big)^*,
$$
and, replacing $\pi^{(n)}(\;\!\cdot\;\!)$ by the unitarily equivalent representation
$\big(\pi^{(n)}(h)\big)^*\pi^{(n)}(\;\!\cdot\;\!)\;\!\pi^{(n)}(h)$, we infer from
\eqref{diag_elts} that
$$
\big(\pi_{jk}^{(n)}\circ\phi\big)(x)
=j\;\!!(n-j)!\e^{2\pi i(2j-n)(b\cdot x+\eta(x))}\delta_{jk}.
$$
Therefore, a calculation similar to that of \eqref{calc_M} gives
$$
M_{jk}=2\pi a_j\big((y\cdot b)+\L_Y\eta\big)\;\!j!\;\!(n-j)!\;\!(2j-n)\delta_{jk}.
$$
But now we know that $y\cdot b\ne0$, since $y_1,y_2,\ldots,y_d,1$ are rationally
independent. So, we can set $a_j:=(2j-n)\big(2\pi(y\cdot b)\;\!j!(n-j)!\big)^{-1}$,
and thus obtain that
$$
M_{jk}=\big(1+(y\cdot b)^{-1}\L_Y\eta\big)(2j-n)^2\delta_{jk}.
$$
Accordingly, the matrix-valued function $M_N$ given in \eqref{matrix_M_N} reduces to
\begin{equation}\label{M_N_SU(2)}
M_N
=\frac1N\sum_{m=0}^{N-1}M\circ F_m
=\left(1+(y\cdot b)^{-1}\frac1N\sum_{m=0}^{N-1}\L_Y\eta\circ F_m\right)
\begin{pmatrix}
(2\cdot0-n)^2 && 0\\
& \ddots &\\
0 && (2\cdot n-n)^2
\end{pmatrix}.
\end{equation}

The following theorem on the spectrum of the operators $U_{\pi^{(n)}\!,j}$ and
$U_\phi$ associated to the skew product $T_\phi$ complements Lemma
\ref{first_lemma_SU(2)}.

\begin{Theorem}\label{thm_SU(2)}
Let $\phi$ satisfy \eqref{condition_phi_SU(2)} with $b\in\Z^d\setminus\{0\}$ and
Assumption \ref{assumption_eta_SU(2)}. Suppose that $y_1,y_2,\ldots,y_d,1$ are
rationally independent, and take $n\in2\N+1$ and $j\in\{0,\ldots,n\}$. Then,
$U_{\pi^{(n)}\!,j}$ has purely Lebesgue spectrum. In particular, the restriction of
$U_\phi$ to the subspace
$
\bigoplus_{n\in2\N+1}\bigoplus_{j=0}^n\H^{(\pi^{(n)})}_j\subset\H
$
has countable Lebesgue spectrum.
\end{Theorem}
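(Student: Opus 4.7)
The plan is to apply Theorem \ref{second_theorem} in each subspace $\H^{(\pi^{(n)})}_j$, in direct parallel with the proof of Theorem \ref{thm_abelian}. The bulk of the algebra has already been carried out in the derivation of formula \eqref{M_N_SU(2)}: with the choice $a_j:=(2j-n)\big(2\pi(y\cdot b)\,j!(n-j)!\big)^{-1}$, $j\in\{0,\ldots,n\}$, which is well-defined because rational independence of $y_1,\ldots,y_d,1$ together with $b\in\Z^d\setminus\{0\}$ forces $y\cdot b\ne0$, the matrix $M_N$ reduces to a scalar factor multiplied by the diagonal matrix $\diag\big((2j-n)^2\big)_{j=0,\ldots,n}$.

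First I would verify the two structural hypotheses of Theorem \ref{second_theorem}. Assumption \ref{ass_phi} for $\pi^{(n)}\circ\phi$ follows from Assumption \ref{assumption_eta_SU(2)}: since
$$
\big(\pi^{(n)}_{jk}\circ\phi\big)(x)=j!(n-j)!\,\e^{2\pi i(2j-n)(b\cdot x+\eta(x))}\delta_{jk},
$$
the corresponding Lie derivatives equal $2\pi i(2j-n)j!(n-j)!\,\big((y\cdot b)+\L_Y\eta\big)$ times $\pi^{(n)}_{jk}\circ\phi$, and the Dini-type control on $\L_Y\eta$ propagates to the Dini-type bound on $\L_Y(\pi^{(n)}_{jk}\circ\phi)$ required in Assumption \ref{ass_phi}, since the remaining factor is a bounded $C^\infty$ function of $\eta$ composed with $F_t$. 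The commutation condition $(a_k-a_\ell)(\pi^{(n)}_{\ell k}\circ\phi)\equiv0$ holds trivially, as the off-diagonal matrix elements vanish by construction.

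Next I would verify positivity of $\lambda_{*,N}$ for $N$ large. Using \eqref{M_N_SU(2)}, the key point is that the Birkhoff averages $\frac1N\sum_{m=0}^{N-1}\L_Y\eta\circ F_m$ converge uniformly on $\T^d$ to the space mean $\int_{\T^d}\L_Y\eta\,\d\mu_{\T^d}$, which vanishes by an approximation/divergence-theorem argument (approximate $\eta$ by $C^\infty$ functions, for which $\int_{\T^d}y\cdot\nabla\eta\,\d\mu_{\T^d}=0$ is immediate, and pass to the limit using the Dini condition). Consequently $M_N\to\diag\big((2j-n)^2\big)$ uniformly on $\T^d$. Because $n$ is odd, every entry satisfies $(2j-n)^2\ge1$, so $\lambda_{*,N}\ge1/2$ for all $N$ sufficiently large. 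Theorem \ref{second_theorem} then yields purely absolutely continuous spectrum for $U_{\pi^{(n)},j}$, and Lemma \ref{Lemma_Lebesgue} upgrades this to Lebesgue spectrum, since rational independence of $y_1,\ldots,y_d,1$ ensures some $y_{k_0}\in\R\setminus\Q$. The claim for the restriction of $U_\phi$ to $\bigoplus_{n\in2\N+1}\bigoplus_{j=0}^n\H^{(\pi^{(n)})}_j$ follows by summing over this countable family and using separability of $\H$.

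The main technical point is the uniform vanishing of the Birkhoff means of $\L_Y\eta$ under only the Dini-type regularity of Assumption \ref{assumption_eta_SU(2)}; unique ergodicity of $F_1$ delivers uniform convergence to the mean only for continuous integrands, and the Dini condition provides just enough regularity to approximate $\L_Y\eta$ uniformly by continuous functions and pass to the limit, exactly as in the abelian setting of Theorem \ref{thm_abelian}. Once this is settled, the rest of the argument is mechanical, assembled from Theorem \ref{second_theorem}, Lemma \ref{Lemma_Lebesgue}, and the algebraic reduction already carried out above the statement.
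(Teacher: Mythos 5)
Your proposal is correct and follows essentially the same route as the paper: verify Assumption \ref{ass_phi} and the commutation condition for the diagonal cocycle $\pi^{(n)}\circ\phi$, use unique ergodicity of $F_1$ to show $M_N\to\diag\big((2j-n)^2\big)_{j=0}^{n}$ uniformly (the mean of $\L_Y\eta$ vanishing), deduce $\lambda_{*,N}>0$ for large $N$ since $n$ is odd, and conclude via Theorem \ref{second_theorem}, Lemma \ref{Lemma_Lebesgue} and separability of $\H$. The only difference is that you spell out slightly more detail (the approximation argument for the vanishing mean and the propagation of the Dini condition), which the paper leaves implicit.
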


\begin{proof}
We know that $\phi$ satisfies Assumption \ref{ass_phi} and that
$(a_k-a_\ell)\big(\pi_{\ell k}^{(n)}\circ\phi\big)\equiv0$ for all
$k,\ell\in\{0,\ldots,n\}$. Furthermore, due to the unique ergodicity of $F_1$, we
deduce from \eqref{M_N_SU(2)} that
\begin{align*}
\lim_{N\to\infty}M_N
&=\left(1+(y\cdot b)^{-1}\lim_{N\to\infty}\frac1N\sum_{m=0}^{N-1}\L_Y\eta
\circ F_m\right)
\begin{pmatrix}
(2\cdot0-n)^2 && 0\\
& \ddots &\\
0 && (2\cdot n-n)^2
\end{pmatrix}\\
&=\left(1+(y\cdot b)^{-1}\int_{\T^d}\d\mu_{\T^d}\,\L_Y\eta\right)
\begin{pmatrix}
(2\cdot0-n)^2 && 0\\
& \ddots &\\
0 && (2\cdot n-n)^2
\end{pmatrix}\\
&=\begin{pmatrix}
(2\cdot0-n)^2 && 0\\
& \ddots &\\
0 && (2\cdot n-n)^2
\end{pmatrix}
\end{align*}
uniformly on $\T^d$. Therefore, since $n\in2\N+1$, one has that
$$
\lim_{N\to\infty}\lambda_{*,N}
=\inf_{k\in\{0,\ldots,n\},\,x\in\T^d}\,\lambda_k\left(\lim_{N\to\infty}M_N(x)\right)
=\inf_{k\in\{0,\ldots,n\}}\,\lambda_k
\begin{pmatrix}
(2\cdot0-n)^2 && 0\\
& \ddots &\\
0 && (2\cdot n-n)^2
\end{pmatrix}
=1,
$$
and thus $\lambda_{*,N}>0$ if $N$ is large enough. So, it follows from Theorem
\ref{second_theorem} and Lemma \ref{Lemma_Lebesgue} that $U_{\pi^{(n)}\!,j}$ has
purely Lebesgue spectrum. The claim on $U_\phi$ follows from what precedes if one
takes into account the separability of the Hilbert space
$\H\equiv\ltwo\big(\T^d\times\SU(2),\mu_{\T^d}\otimes\mu_{\SU(2)}\big)$.
\end{proof}

Theorem \ref{thm_SU(2)} should be compared with prior results for skew products on
$\T^d\times\SU(2)$ obtained by K. Fr{\polhk{a}}czek (but see also
\cite{Hou11,Kri01}). When $F_1$ is an ergodic translation on $\T^d$ ($d=1,2$), K.
Fr{\polhk{a}}czek exhibits in \cite[Thm.~6.1 \& Thm.~8.2]{Fra00_2} conditions on the
cocycle $\phi$ guaranteeing that the restriction of $U_\phi$ to the subspace
$\bigoplus_{n\in2\N+1}\bigoplus_{j=0}^n\H^{(\pi^{(n)})}_j$ has countable Lebesgue
spectrum. In dimension $d=1$, these conditions are verified if 
$\phi\in C^2\big(\T;\SU(2)\big)$, if $\phi$ has nonzero topological degree, and if
$\phi$ is cohomologous to a diagonal cocyle with a transfer function
$\zeta:\T\to\SU(2)$ of bounded variation and with
$\zeta'\zeta^{-1}\in\ltwo\big(\T;\frak{su}(2)\big)$ (see \cite[Cor.~6.5]{Fra00_2}).
This is similar (but not completely equivalent) to the conditions satisfied by $\phi$
when $d=1$ in Theorem \ref{thm_SU(2)} (in Theorem \ref{thm_SU(2)}, $\phi$ has
Dini-continuous derivative along the flow $\{F_t\}_{t\in\R}$, it has topological
degree $b\ne0$, and it is cohomologous to a diagonal cocyle with a constant transfer
function). K. Fr{\polhk{a}}czek also shows various properties of the topological
degree of cocycles $\phi\in C^2\big(\T;\SU(2)\big)$ such as the fact that it takes
values in $\Z$ or the fact that it is invariant under the relation of measurable
cohomology (see \cite[Thm.~2.7 \& Thm.~2.10]{Fra04}).

\subsection{The case $X=\T^d$ and $G=\U(2)$}\label{Sec_U(2)}

Suppose that $X=\T^d$ for some $d\ge1$, let $G=\U(2)$, set
$\H:=\ltwo\big(\T^d\times\U(2),\mu_{\T^d}\otimes\mu_{\U(2)}\big)$, let
$\{F_t\}_{t\in\R}$ be the translation flow on $\T^d$ given by
$$
F_t(x):=x+ty~\hbox{(mod $\Z^d$)},\quad t\in\R,~x\in\T^d,
$$
for some $y:=(y_1,y_2,\ldots,y_d)\in\R^d$, take $\xi:\T^d\to\U(2)$ a Lie group
homomorphism, and let
$$
T_\xi(x,g)=\big(x+y,g\;\!\xi(x)\big),\quad(x,g)\in\T^d\times\U(2).
$$
Since $\T^d$ is abelian, the range of $\xi$ is contained in a maximal torus of
$\U(2)$ of the form
$
\left\{h
\left(\begin{smallmatrix}
z_1 & 0\\
0 & z_2
\end{smallmatrix}\right)
h^*\right\}_{z_1,z_2\in\S^1}
$
for some $h\in\U(2)$ (see \cite[Thm.~IV.1.6 \& Prop.~IV.3.1]{BtD85}). So, we can
suppose without loss of generality that
\begin{equation}\label{condition_xi_bis}
\xi(x)=h
\begin{pmatrix}
\e^{2\pi i(b_1\cdot x)} & 0\\
0 & \e^{2\pi i(b_2\cdot x)}
\end{pmatrix}
h^*,\quad x\in\T^d,
\end{equation}
for some vectors $b_1,b_2\in\Z^d$, and thus that
\begin{equation}\label{eq_xi_bis}
\big(\pi\circ\xi\big)(x)
=\pi(h)\;\!\pi
\begin{pmatrix}
\e^{2\pi i(b_1\cdot x)} & 0\\
0 & \e^{2\pi i(b_2\cdot x)}
\end{pmatrix}
\big(\pi(h)\big)^*
\end{equation}
for each $\pi$, finite-dimensional IUR of $\U(2)$.

Now, the fact that the map $\S^1\times\SU(2)\ni(z,g)\mapsto zg\in\U(2)$ is an
epimorphism with kernel $\{(1,e_{\SU(2)}),(-1,-e_{\SU(2)})\}$ implies  that the set
$\widehat{\U(2)}$ of all equivalence classes of finite-dimensional IUR's of $\U(2)$
coincides (up to unitary equivalence) with the set of tensors products
$\{\rho_{2m-n}\otimes\pi^{(n)}\}_{m\in\Z,n\in\N}$, with
$\pi^{(n)}$ as in Section \ref{Sec_SU(2)} and $\rho_{2m-n}:\S^1\to\U(1)\equiv\S^1$
given by $\rho_{2m-n}(z):=z^{2m-n}$ (see \cite[Sec.~II.5]{BtD85}). Therefore, if one
uses \eqref{diag_elts}, \eqref{eq_xi_bis} and the factorisation
$$
\begin{pmatrix}
\e^{2\pi i(b_1\cdot x)} & 0\\
0 & \e^{2\pi i(b_2\cdot x)}
\end{pmatrix}
=
\e^{\pi i(b_+\cdot x)}
\begin{pmatrix}
\e^{\pi i(b_-\cdot x)} & 0\\
0 & \e^{-\pi i(b_-\cdot x)}
\end{pmatrix},
\quad b_\pm:=b_1\pm b_2,
$$
and if one replaces $\big(\rho_{2m-n}\otimes\pi^{(n)}\big)(\;\!\cdot\;\!)$ by the
unitarily equivalent representation
$$
\big(\big(\rho_{2m-n}\otimes\pi^{(n)}\big)(h)\big)^*
\big(\rho_{2m-n}\otimes\pi^{(n)}\big)(\;\!\cdot\;\!)
\big(\rho_{2m-n}\otimes\pi^{(n)}\big)(h),
$$
one obtains that
$$
\big(\big(\rho_{2m-n}\otimes\pi^{(n)}\big)_{jk}\circ\xi\big)(x)
=j\;\!!(n-j)!\e^{\pi i((2m-n)(b_+\cdot x)+(2j-n)(b_-\cdot x))}\delta_{jk},
\quad j,k\in\{0,\ldots,n\}.
$$
Then, putting this expression in the formula \eqref{formula_M} for the matrix-valued
function $M$, one obtains that
$$
M_{jk}=\pi a_j\;\!j\;\!!(n-j)!
\big((2m-n)(b_+\cdot y)+(2j-n)(b_-\cdot y)\big)\;\!\delta_{jk}.
$$
Setting 
$
a_j:=\big((2m-n)(b_+\cdot y)+(2j-n)(b_-\cdot y)\big)\big(\pi\;\!j\;\!!(n-j)!\big)^{-1}
$,
one thus obtains that
$$
M_{jk}=\big((2m-n)(b_+\cdot y)+(2j-n)(b_-\cdot y)\big)^2\;\!\delta_{jk}.
$$
As a consequence, we obtain the following result on the spectrum of the operators
$U_{\rho_{2m-n}\otimes\pi^{(n)}\!,j}$ and $U_\xi$ associated to the skew product
$T_\xi$.

\begin{Lemma}\label{first_lemma_U(2)}
Let $\xi$ satisfy \eqref{condition_xi_bis}, set
\begin{equation}\label{def_R}
R:=\left\{(m,n)\in\Z\times\N\mid
\inf_{k\in\{0,\ldots,n\}}\big((2m-n)(b_+\cdot y)+(2k-n)(b_-\cdot y)\big)^2>0\right\},
\end{equation}
and take $(m,n)\in R$ and $j\in\{0,\ldots,n\}$. Then,
$U_{\rho_{2m-n}\otimes\pi^{(n)}\!,j}$ has purely absolutely continuous spectrum. In
particular, the restriction of $U_\xi$ to the subspace
$
\bigoplus_{(m,n)\in R}\bigoplus_{j=0}^n\H^{(\rho_{2m-n}\otimes\pi^{(n)})}_j\subset\H
$
has purely absolutely continuous spectrum.
\end{Lemma}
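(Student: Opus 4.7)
The plan is to apply Theorem \ref{first_theorem} with $\pi=\rho_{2m-n}\otimes\pi^{(n)}$ and with the scalars $a_j$ fixed just before the lemma statement. Three items must be checked: (i) Assumption \ref{ass_phi} holds for $\xi$; (ii) the commutation condition $(a_k-a_\ell)(\pi_{\ell k}\circ\xi)\equiv0$ for all $k,\ell\in\{0,\ldots,n\}$; (iii) the positivity $\lambda_*>0$.

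For (i), since $\xi$ is a Lie group homomorphism written explicitly in \eqref{condition_xi_bis} and $\rho_{2m-n}\otimes\pi^{(n)}$ is a smooth representation, the composition $(\rho_{2m-n}\otimes\pi^{(n)})\circ\xi$ is $C^\infty$ on the compact manifold $\T^d$. Hence each matrix element has a smooth Lie derivative $\L_Y(\;\!\cdot\;\!)=y\cdot\nabla_x(\;\!\cdot\;\!)$ along the translation flow, and the Dini-type integral in Assumption \ref{ass_phi} is finite (in fact the integrand is bounded by a multiple of $t$ by the mean value theorem). For (ii), the diagonal formula displayed just before the lemma,
$$
\big((\rho_{2m-n}\otimes\pi^{(n)})_{jk}\circ\xi\big)(x)
=j\;\!!(n-j)!\,\e^{\pi i((2m-n)(b_+\cdot x)+(2j-n)(b_-\cdot x))}\delta_{jk},
$$
shows that $\pi_{\ell k}\circ\xi$ vanishes identically whenever $k\ne\ell$, so the commutation condition is automatic.

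For (iii), I would use the choice of $a_j$ made just before the lemma to read off, from the computation displayed there, that $M$ is the diagonal matrix-valued function
$$
M_{jk}(x)=\big((2m-n)(b_+\cdot y)+(2j-n)(b_-\cdot y)\big)^2\delta_{jk},
\quad x\in\T^d.
$$
Since $M(x)$ is diagonal and independent of $x$, its eigenvalues are exactly its diagonal entries, and the constant $\lambda_*$ defined in \eqref{lambda_star} coincides with $\inf_{k\in\{0,\ldots,n\}}\big((2m-n)(b_+\cdot y)+(2k-n)(b_-\cdot y)\big)^2$. By the very definition \eqref{def_R} of $R$, the hypothesis $(m,n)\in R$ is equivalent to $\lambda_*>0$. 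Theorem \ref{first_theorem} then yields the strict Mourre estimate and the purely absolutely continuous spectrum of $U_{\rho_{2m-n}\otimes\pi^{(n)},j}$.

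The statement about $U_\xi$ is then immediate from the orthogonal decomposition \eqref{eq_decompo}: the operator $U_\xi$ is reduced by each $\H^{(\pi)}_j$ and its restriction to such a subspace is $U_{\pi,j}$, so a countable direct sum of operators with purely absolutely continuous spectrum remains purely absolutely continuous. There is no real obstacle in this proof; the entire content of the lemma is already contained in Theorem \ref{first_theorem} together with the diagonal structure produced by the conjugation by $h$ and the explicit choice of $a_j$. The only substantive check is to match the positivity condition $\lambda_*>0$ with the set $R$, which is tautological.
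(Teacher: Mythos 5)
Your proposal is correct and follows exactly the route the paper takes: verify Assumption \ref{ass_phi} (automatic for the smooth homomorphism $\xi$), note that the commutation condition is trivial because $(\rho_{2m-n}\otimes\pi^{(n)})\circ\xi$ is diagonal, identify $\lambda_*$ with the infimum appearing in the definition of $R$, and invoke Theorem \ref{first_theorem}; the paper's own proof is just a terser statement of these same three checks. The closing argument for $U_\xi$ via the reducing decomposition \eqref{eq_decompo} also matches the paper's implicit reasoning.
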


\begin{proof}
We know that $\xi$ satisfies Assumption \ref{ass_phi}, that
$(a_k-a_\ell)\big(\pi_{\ell k}^{(n)}\circ\xi\big)\equiv0$ for all
$k,\ell\in\{0,\ldots,n\}$, and
that
$$
\lambda_*=\inf_{k\in\{0,\ldots,n\}}\big((2m-n)(b_+\cdot y)+(2k-n)(b_-\cdot y)\big)^2>0.
$$
So, the claim is a direct consequence of Theorem \ref{first_theorem}.
\end{proof}

\begin{Remark}
Some particular cases of Lemma \ref{first_lemma_U(2)} are worth mentioning. First, if
$b_1=b_2$, then $\lambda_*=4(2m-n)^2(b_1\cdot y)^2$. Thus,
$U_{\rho_{2m-n}\otimes\pi^{(n)}\!,j}$ has purely absolutely continuous spectrum if
$2m\ne n$ and $b_1\cdot y\ne0$. Second, if $b_1=-b_2$, then
$$
\lambda_*
=\inf_{k\in\{0,\ldots,n\}}4(2k-n)^2(b_1\cdot y)^2
=
\begin{cases}
0 & \hbox{if}~~n\in2\N\\
4(b_1\cdot y)^2 & \hbox{if}~~n\in2\N+1.
\end{cases}
$$
Thus, $U_{\rho_{2m-n}\otimes\pi^{(n)}\!,j}$ has purely absolutely continuous spectrum
if $n\in2\N+1$ and $b_1\cdot y\ne0$ (we recover the result of Lemma
\ref{first_lemma_SU(2)}, since $\xi$ takes values in $\SU(2)$). Finally, if $b_1=0$
(or if $b_2=0$; this is similar), then
$$
\lambda_*
=\inf_{k\in\{0,\ldots,n\}}4(m-k)^2(b_2\cdot y)^2
=
\begin{cases}
0 & \hbox{if}~~m\in\{0,\ldots,n\}\\
4(b_2\cdot y)^2 & \hbox{if}~~m\in\Z\setminus\{0,\ldots,n\}.
\end{cases}
$$
Thus, $U_{\rho_{2m-n}\otimes\pi^{(n)}\!,j}$ has purely absolutely continuous spectrum
if $m\in\Z\setminus\{0,\ldots,n\}$ and $b_2\cdot y\ne0$.
\end{Remark}

As in the previous sections, we can treat more general cocycles $\phi$ if $F_1$ is
uniquely ergodic. So, from now on we assume that $y_1,y_2,\ldots,y_d,1$ are
rationally independent and we suppose that $\phi:\T^d\to\U(2)$ is a perturbation of
$\xi$ in the sense that
\begin{equation}\label{condition_phi_U(2)}
\phi(x)=h
\begin{pmatrix}
\e^{2\pi i(b_1\cdot x+\eta_1(x))} & 0\\
0 & \e^{2\pi i(b_2\cdot x+\eta_2(x))}
\end{pmatrix}
h^*,\quad x\in\T^d,
\end{equation}
with $\eta_1,\eta_2\in C(\T^d;\R)$ satisfying the following\;\!:

\begin{Assumption}\label{assumption_eta_U(2)}
For $k=1,2$, the function $\eta_k\in C(\T^d;\R)$ is such that $\L_Y\eta_k$ exists and
satisfies the Dini-type condition
$$
\int_0^1\frac{\d t}t\,\big\|\L_Y\eta_k\circ F_t-\L_Y\eta_k\big\|_{\linf(\T^d)}
<\infty.
$$
\end{Assumption}

If we proceed as before, we obtain that
$$
\big((\rho_{2m-n}\otimes\pi^{(n)})_{jk}\circ\xi\big)(x)\\
=j\;\!!(n-j)!\e^{\pi i\{(2m-n)(b_+\cdot x+\eta_+(x))+(2j-n)(b_-\cdot x+\eta_-(x))\}}
\delta_{jk},\quad\eta_\pm:=\eta_1\pm\eta_2.
$$
Then, calculations similar to those of the previous section lead to the following
result on the spectrum of the operators $U_{\rho_{2m-n}\otimes\pi^{(n)}\!,j}$ and
$U_\phi$ associated to the skew product $T_\phi$ (review \eqref{def_R} for the
definition of $R$).

\begin{Theorem}\label{thm_U(2)}
Let $\phi$ satisfy \eqref{condition_phi_U(2)} and Assumption
\ref{assumption_eta_U(2)}, suppose that $y_1,y_2,\ldots,y_d,1$ are rationally
independent, and take $(m,n)\in R$ and $j\in\{0,\ldots,n\}$. Then,
$U_{\rho_{2m-n}\otimes\pi^{(n)}\!,j}$ has purely Lebesgue spectrum. In particular,
the restriction of $U_\phi$ to the subspace
$
\bigoplus_{(m,n)\in R}\bigoplus_{j=0}^n\H^{(\rho_{2m-n}\otimes\pi^{(n)})}_j\subset\H
$
has countable Lebesgue spectrum.
\end{Theorem}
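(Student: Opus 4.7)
The plan is to run the proof in exact parallel to that of Theorem \ref{thm_SU(2)}, exploiting the fact that in the adapted basis the matrix-valued function $(\rho_{2m-n}\otimes\pi^{(n)})\circ\phi$ is diagonal, so that all the intermediate matrix computations decouple into scalar ones on each diagonal entry. First I would verify that $\phi$ satisfies Assumption \ref{ass_phi}: using \eqref{diag_elts} together with the factorisation
$$
\begin{pmatrix}
\e^{2\pi i(b_1\cdot x+\eta_1(x))} & 0\\
0 & \e^{2\pi i(b_2\cdot x+\eta_2(x))}
\end{pmatrix}
=\e^{\pi i(b_+\cdot x+\eta_+(x))}
\begin{pmatrix}
\e^{\pi i(b_-\cdot x+\eta_-(x))} & 0\\
0 & \e^{-\pi i(b_-\cdot x+\eta_-(x))}
\end{pmatrix}
$$
with $\eta_\pm:=\eta_1\pm\eta_2$, and replacing the representation by the unitarily equivalent conjugate by $(\rho_{2m-n}\otimes\pi^{(n)})(h)$, one obtains the pointwise formula
$$
\big((\rho_{2m-n}\otimes\pi^{(n)})_{jk}\circ\phi\big)(x)
=j\;\!!(n-j)!\;\!\e^{\pi i\{(2m-n)(b_+\cdot x+\eta_+(x))+(2j-n)(b_-\cdot x+\eta_-(x))\}}\delta_{jk}.
$$
Since $\L_Y\eta_\pm=\L_Y\eta_1\pm\L_Y\eta_2$ inherits the Dini-type condition from Assumption \ref{assumption_eta_U(2)} by the triangle inequality, Assumption \ref{ass_phi} holds. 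The commutation requirement $(a_k-a_\ell)((\rho_{2m-n}\otimes\pi^{(n)})_{\ell k}\circ\phi)\equiv 0$ is automatic because the above matrix is diagonal.

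Next I would compute the matrix-valued function $M$ from \eqref{formula_M}. Differentiating the exponential above along $Y=y\cdot\nabla_x$ and multiplying by $(\rho_{2m-n}\otimes\pi^{(n)})^*\circ\phi$ produces
$$
M_{jk}=\pi a_j\;\!j\;\!!(n-j)!\big\{(2m-n)(b_+\cdot y+\L_Y\eta_+)+(2j-n)(b_-\cdot y+\L_Y\eta_-)\big\}\;\!\delta_{jk}.
$$
Since $(m,n)\in R$, the quantity $\alpha_j:=(2m-n)(b_+\cdot y)+(2j-n)(b_-\cdot y)$ is nonzero for every $j\in\{0,\ldots,n\}$; so one can legitimately set $a_j:=\alpha_j\big(\pi\;\!j\;\!!(n-j)!\big)^{-1}$, mimicking the choice from Lemma \ref{first_lemma_U(2)}. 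With this choice, $M$ takes the convenient form
$$
M_{jk}=\big(\alpha_j^2+\alpha_j\{(2m-n)\L_Y\eta_++(2j-n)\L_Y\eta_-\}\big)\;\!\delta_{jk},
$$
so that $M$ is a constant diagonal matrix $\mathrm{diag}(\alpha_0^2,\ldots,\alpha_n^2)$ perturbed by a multiplication operator whose entries are linear combinations of the Lie derivatives $\L_Y\eta_\pm$.

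Now I would compute $M_N$ from \eqref{matrix_M_N}. Diagonality of $\pi\circ\phi$ collapses $M_N$ to the Birkhoff average $\frac1N\sum_{k=0}^{N-1}M\circ F_k$. Since $y_1,\ldots,y_d,1$ are rationally independent, $F_1$ is uniquely ergodic on $\T^d$, and since $\eta_\pm\in C(\T^d;\R)$ with $\L_Y\eta_\pm\in\linf(\T^d)$ we have $\int_{\T^d}\L_Y\eta_\pm\;\!\d\mu_{\T^d}=0$ (this is the same input used in the proof of Theorem \ref{thm_SU(2)}). Thus the Birkhoff averages of the perturbation vanish uniformly on $\T^d$, and
$$
\lim_{N\to\infty}M_N=\mathrm{diag}(\alpha_0^2,\ldots,\alpha_n^2)
\quad\text{uniformly on }\T^d.
$$
In view of \eqref{lambda_star_N} and the definition of $R$, we get $\lim_{N\to\infty}\lambda_{*,N}=\min_{k\in\{0,\ldots,n\}}\alpha_k^2>0$, hence $\lambda_{*,N}>0$ for $N$ large enough.

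At this point, Theorem \ref{second_theorem} yields purely absolutely continuous spectrum for $U_{\rho_{2m-n}\otimes\pi^{(n)}\!,j}$, and Lemma \ref{Lemma_Lebesgue} upgrades this to purely Lebesgue spectrum using the ergodic direction $y_{k_0}$. The countable Lebesgue statement for the restriction of $U_\phi$ to $\bigoplus_{(m,n)\in R}\bigoplus_{j=0}^n\H^{(\rho_{2m-n}\otimes\pi^{(n)})}_j$ then follows by decomposing this subspace over the countable index set and invoking separability of $\ltwo(\T^d\times\U(2))$, exactly as in the final step of Theorem \ref{thm_SU(2)}. The only genuinely new point compared with the $\SU(2)$ case is the freedom to choose the scalars $a_j$ so that all entries of $M$ simultaneously take the clean form above; this is possible precisely because $(m,n)\in R$ guarantees $\alpha_j\ne0$ for every $j$. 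Beyond that, the argument is routine bookkeeping and the main obstacle is only notational.
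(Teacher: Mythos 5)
Your proposal is correct and follows exactly the route the paper intends: the paper omits the proof of Theorem \ref{thm_U(2)}, saying only that ``calculations similar to those of the previous section lead to'' the result, and your argument supplies precisely those calculations (diagonal form of $(\rho_{2m-n}\otimes\pi^{(n)})\circ\phi$, the choice $a_j=\alpha_j(\pi\;\!j!\;\!(n-j)!)^{-1}$ made possible by $(m,n)\in R$, the Birkhoff average of $M$ converging uniformly to $\mathrm{diag}(\alpha_0^2,\ldots,\alpha_n^2)$ by unique ergodicity, and then Theorem \ref{second_theorem} plus Lemma \ref{Lemma_Lebesgue} and separability). No gaps beyond those already implicit in the paper's own sketch.
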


As far as we know, the result of Theorem \ref{thm_U(2)} is new. Besides, it would be
possible to build on and apply the method of Section \ref{Sec_spectrum} to cocycles
$\phi:X\to G$ taking values in other (higher dimensional) Lie groups than $\SU(2)$ or
$\U(2)$. However, doing this leads one to consider more and more involved
(combinations of tensor products of) families of IUR's in order to prove spectral
results similar to Theorems \ref{thm_SU(2)} and \ref{thm_U(2)}. Thus, we curbed our
enthusiasm, hoping that the transition from $G=\SU(2)$ in Section \ref{Sec_SU(2)} to
$G=\U(2)$ in this section already illustrates the type of procedure one has to
follow.


\def\polhk#1{\setbox0=\hbox{#1}{\ooalign{\hidewidth
  \lower1.5ex\hbox{`}\hidewidth\crcr\unhbox0}}}
  \def\polhk#1{\setbox0=\hbox{#1}{\ooalign{\hidewidth
  \lower1.5ex\hbox{`}\hidewidth\crcr\unhbox0}}}
  \def\polhk#1{\setbox0=\hbox{#1}{\ooalign{\hidewidth
  \lower1.5ex\hbox{`}\hidewidth\crcr\unhbox0}}} \def\cprime{$'$}
  \def\cprime{$'$} \def\polhk#1{\setbox0=\hbox{#1}{\ooalign{\hidewidth
  \lower1.5ex\hbox{`}\hidewidth\crcr\unhbox0}}}
  \def\polhk#1{\setbox0=\hbox{#1}{\ooalign{\hidewidth
  \lower1.5ex\hbox{`}\hidewidth\crcr\unhbox0}}} \def\cprime{$'$}
  \def\cprime{$'$}


\end{document}